\numberwithin{equation}{section} 
\newtheorem{theorem}{Theorem}[section]
\newtheorem{theorem*}{Theorem}
\newtheorem{lemma}[theorem]{Lemma}
\newtheorem{corollary}[theorem]{Corollary}
\newtheorem{remark}[theorem]{Remark}
\newtheorem{remark*}[theorem*]{Remark}
\newtheorem{definition}[theorem]{Definition}
\def\R{{\mathbb R}}
\def\cB{{\mathcal B}}
\def\1{\left(}
\def\2{\right)}
\def\3{\left\{}
\def\4{\right\}}
\def\8{\infty}
\def\ss{\subseteq}
\DeclareMathOperator*{\osc}{osc}
\begin{document}
\title{Comparison Principles for nonlocal Hamilton-Jacobi equations}

\author[G. D\'avila]{Gonzalo D\'avila}
\address{Universidad T\'ecnica Federico Santa Mar\'ia, Departamento de Matem\'atica, Avenida Espa\~na 1680, Valpara\'iso, Chile}
\email{gonzalo.davila@usm.cl}
\begin{abstract}
We prove the comparison principle for viscosity sub and super solutions of degenerate nonlocal operators with general nonlocal gradient nonlinearities. The proofs apply to purely Hamilton-Jacobi equations of order $0<s<1$.
\end{abstract}

\maketitle

\section{Introduction}\label{SecIntro}

Let us recall the classic first order Hamilton-Jacobi equation
\begin{align}\label{HJ}
\left \{ \begin{array}{rll}
u_t+H(x,u,Du) & = 0 \quad & \mbox{in} \ \R^N\times(0,T) \\
u(x,0) & = u_0(x) \quad & \mbox{in} \ \R^N,
\end{array} \right .
\end{align}
where $H$ and $u_0$ are some continuous functions. These nonlinear first order equations were first
introduced in classical mechanics, but find application in many other fields of mathematics. They also arise naturally in optimal control theory, due to the Dynamic Programming Principle. We refer to the book of Bardi and Capuzzo Dolcetta, \cite{BC}, for a thorough review on the subject. 

Equation \eqref{HJ} has been studied extensively in the past decades (see for example \cite{B, B2, B3, NJ, BSoug, Ishii}), where comparison, existence, uniqueness, large time asymptotics and qualitative properties have been researched. 

One particular important example of Hamilton-Jacobi equations is the so called \textit{eikonal equation}. There are several equivalent ways to state the equation, one of its most common form is 
\begin{align}\label{eik}
|Du|=f(x),\quad x\in\Omega,
\end{align}
for some open set $\Omega\ss\R^N$. This equation is related to many classical problems, for example geometrical optics and wave propagation, see for example \cite{Arnold}.

The aim of this paper is to generalize Hamilton-Jacobi equations and in particular \eqref{eik} to the fractional setting. More precisely, we want to study Hamilton-Jacobi equations where the dependence of the gradient is replaced by a nonlocal operator playing the same role. Next we motivate the definition of a nonlocal gradient.

Let $0<s<1$ and denote $(-\Delta)^s$ the fractional laplacian, that is
\[
(-\Delta)^su=\text{P.V.}c_{N,s}\int_{\R^N}\frac{u(x)-u(y)}{|x-y|^{N+2s}}dy,
\]
where $c_{N,s}$ is a  normalization constant taken so that $(-\Delta)^su\to-\Delta u$ for smooth functions $u$. Define also the operator $\cB$ is defined by 
\begin{align}\label{CB}
\mathcal B(u,u)=c_{N,s}\int_{\R^n}\frac{|u(x)-u(y)|^2}{|x-y|^{N+2s}}dy.
\end{align}

Geometrically, $\cB$ comes from a formulation of the heat flow of harmonic maps with free boundary written in nonlocal form, namely 
\[
\partial_t u +(-\Delta)^s u \perp T_{u(x)} \mathbb S^{m-1}.
\]
As described in \cite{Millot-Sire,DLP, DR2, DR2}, harmonic maps with free boundary can be reformulated in a very natural way as harmonic maps with respect to an $\dot H^{1/2}$-energy, hence leading to a nonlocal system of elliptic equations, namely
\[
u_t+(-\Delta)^s u=u\mathcal B(u,u),  \ \ \text{in}\ \R^N\times(-1,1).
\]

This type of operator also appears naturally in other areas of mathematics. For example, in probability this operator is known as the \textit{carr\'e du champ}, see the classic work by Bakry and \'Emery \cite{BE} and also \cite{BGL}. See also \cite{SWZ} for a connection of the Bakry-\'Emery theory of curvature dimension inequalities and the fractional laplacian. It also appears naturally when studying Dirichlet forms, see for example \cite{MR}.

Moreover since the operator $\cB$ is the \textit{carr\'e du champ} operator associated to the fractional laplacian, it appears in routine computations, for example 
\begin{align}\label{product}
\Delta^s(uv)=u \Delta^sv + v \Delta^su + 2\cB(u,v).
\end{align}
where we have set $-(-\Delta)^s=\Delta^s$ and also appears in standard integration by parts
\begin{align}\label{byparts}
\int_{\R^N}v \Delta^sudx=-\int_{\R^N}\cB(u,v)(x)dx,
\end{align}

Since $\cB$ appears in the product rule \eqref{product} and integration by parts formula \eqref{byparts}, it comes up in many applications, for example in the fractional Pohozaev identity \cite{RS2}, boundary behaviour \cite{RS}, Harnack inequalities \cite{DQT} and so on.

Let us go back to Hamilton-Jacobi equations. First note that the operator $\cB$ given by \eqref{CB} satisfies
\[
\cB(u,v)\to Du\cdot Dv,\quad s\to1
\]
and in particular
\[
D_s u\coloneqq \sqrt{\cB(u,u)}\to |Du| \quad s\to1.
\]

Therefore, a natural extension of \eqref{eik} would be the \textit{fractional eikonal equation} given by
\begin{align}\label{feik}
D_s u=f(x),\quad x\in\Omega.
\end{align}
More generally, we are interested in studying Hamilton-Jacobi equations of the form
\begin{align}\label{HJ1}
H(x,u,D_s u)=0 \quad x\in\Omega
\end{align} 
or a time dependent version of this equation.

%

The standard framework to study equation \eqref{HJ1} in the local case is the notion of viscosity solutions. In the local case a smooth classical solution is trivially a viscosity solution and vice-versa, although in the nonlocal case this is not obvious. 

In a recent paper by Barrios and Medina (see \cite{BM}) they studied the equivalence of weak and viscosity solutions of
\[
(-\Delta)^s_pu-H(x,u,D_s^p u)=0, \quad x\in\Omega,
\] 
where $(-\Delta)^s_p$ is the fractional $p$-laplacian, $\Omega$ is a bounded subset of $\R^N$ and 
\[
D_s^p u\coloneqq\int_{\R^N}\frac{|u(x)-u(y)|^p}{|x-y|^{N+2s}}dy.
\]
They proved that if
\[
H(x,t,\eta)\leq \gamma(|t|)\eta^{\frac{p-1}{p}}+\phi(x),
\]
for $\eta\geq 0$ and $\phi\in L^\infty_{loc}(\Omega)$ then any viscosity super solution is a weak solution.

Their result also holds for the equation
\begin{align}\label{sigma}
\sigma(-\Delta)^s_pu+H(x,u,D_s^p u)=0, \quad x\in\Omega
\end{align}
for any $\sigma>0$. In the particular case $p=2$, their result implies that whenever the Hamiltonian grows at most linearly on the nonlocal gradient then viscosity solutions are weak. This implies that smooth viscosity solutions are classical, which gives us a way of studying these equations under the viscosity framework by adding a diffusion term. 

More precisely, we can study equation \eqref{feik} by adding $\sigma(-\Delta)^s$ and considering the limit as $\sigma\to 0$, which is the original vanishing viscosity method. The same idea applies to equation \eqref{HJ1} if the Hamiltonian has a power like growth, since we can study the equation
\begin{align*}
\sigma(-\Delta)^su+\tilde H(x,u,D_s u)=0.
\end{align*}
where $\tilde H=H^{1/\tilde p}$ and $\tilde p$ is a sufficiently large odd power.

The aim of this paper is to prove the comparison principle for fractional Hamilton-Jacobi equations. Our result applies to a wider class of Hamilton-Jacobi equations, namely it applies to
\begin{align}\label{LHJ1s}
\sigma(-\Delta)^s u+H(x,D_su))=0,\quad x\in\Omega,
\end{align}
and 
\begin{align}\label{LHJ2s}
\sigma(-\Delta)^s u+H(x,\cB(u,h))=0, \quad x\in\Omega.
\end{align}
for $\sigma\geq 0$. We note that in the linear case 
\[
(-\Delta)^s u+\cB(u,h)=0, \quad x\in\Omega.
\]
there is a recent result by Topp and the author where the comparison principle is proven for \textit{weak} solutions under the additional hypothesis
\begin{align}\label{h}
\osc\limits_{\R^N}h<1.
\end{align}
This condition seems natural, since otherwise the full operator $(-\Delta)^s(\cdot)+\cB(\cdot,h)$ fails to satisfy the maximum principle. See the discussion after Corollary \ref{CPCH}.

Maximum principles and comparison principles is a subject that has been studied thoroughly in the past in several different contexts and equations. There is a vast literature in both local and nonlocal setting for both linear and fully nonlinear equations. We refer to the classic work by Crandall and Ishii \cite{CIs} and Ishii \cite{Is1} in the context of local equations. For nonlocal equations we refer to the work of Barles and Imbert \cite{BI}, Barles,  Chasseigne and Imbert \cite{BCI} and Jakobsen and Karlsen \cite{JK}. For nonlocal p-laplace operators we refer to the paper by Chasseigne and Jakobsen \cite{CJ} and the references therein

Next we state the main results of this article.

\begin{theorem}\label{sconvex}
Let $\Omega\subset\R^N$ be a bounded domain and $\sigma\geq 0$. Let $u_1, u_2\in C(\bar\Omega)\cap \text{B}(\R^N)$ be respectively sub and super solutions of \eqref{LHJ1s} in $\Omega$ with $u_1\leq u_2$ in $\Omega^c$. Suppose $H$ satisfies  hypotheses \emph{H1}, \emph{H2} and furthermore assume that the function $\bar u\rightarrow H(x,\bar u)$ is convex for each $x\in\Omega$. 

Then $u_1\leq u_2$ in $\Omega$.
\end{theorem}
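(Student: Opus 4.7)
The plan is to argue by contradiction using the doubling-of-variables method, with convexity of $H$ used to reduce to a strict subsolution. Suppose for contradiction that $M := \max_{\bar\Omega}(u_1 - u_2) > 0$; the boundary condition guarantees $M$ is attained in $\Omega$. A direct Cauchy--Schwarz on $\mathcal B$ shows that $D_s = \sqrt{\mathcal B(\cdot,\cdot)}$ is a pointwise seminorm; in particular $D_s(u+v) \leq D_s u + D_s v$ and $D_s(\alpha u) = |\alpha| D_s u$. Combining this with convexity of $H(x,\cdot)$ and the structural hypotheses H1, H2, a suitable perturbation $v_\lambda$ of $u_1$---for instance $v_\lambda := (1-\lambda) u_1 + \lambda \phi$, with $\phi$ a smooth strict classical subsolution produced on the bounded domain $\Omega$, or a direct rescaling---is again a viscosity subsolution of \eqref{LHJ1s}, now with strict slack $-\lambda\gamma < 0$. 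It then suffices to prove $v_\lambda \leq u_2$ for each small $\lambda > 0$ and let $\lambda \to 0$.

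For this comparison, let $(x_\varepsilon, y_\varepsilon)$ attain the maximum of
\[
\Phi_\varepsilon(x,y) := v_\lambda(x) - u_2(y) - \frac{|x-y|^2}{2\varepsilon}
\]
on $\bar\Omega \times \bar\Omega$. Standard estimates yield $x_\varepsilon, y_\varepsilon \to \hat x \in \Omega$ for $\varepsilon$ small and $|x_\varepsilon - y_\varepsilon|^2/\varepsilon \to 0$. Applying the viscosity definitions at $(x_\varepsilon, y_\varepsilon)$ with the paraboloid test function $|x-y|^2/(2\varepsilon)$---using a truncation ball $B_\delta$ for the nonlocal parts, and sending $\delta \to 0$ at the end---one obtains strict and nonstrict inequalities. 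The local ($B_\delta$) contributions to the two nonlocal Laplacians cancel up to an $O(\delta^{2-2s}/\varepsilon)$ error vanishing with $\delta$, and the tail contributions satisfy $(-\Delta)^s \tilde v_\lambda(x_\varepsilon) - (-\Delta)^s \tilde u_2(y_\varepsilon) \geq 0$ by the doubled maximum inequality $v_\lambda(x_\varepsilon + h) - v_\lambda(x_\varepsilon) \leq u_2(y_\varepsilon + h) - u_2(y_\varepsilon)$ valid for every $h$.

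The main obstacle is to control the Hamiltonian difference $H(x_\varepsilon, D_s \tilde v_\lambda(x_\varepsilon)) - H(y_\varepsilon, D_s \tilde u_2(y_\varepsilon))$. The $x$-discrepancy is absorbed by H1, but $\mathcal B$ involves \emph{squared} increments, so the one-sided bound above does not directly compare the two nonlocal gradients in a useful direction. My plan is to exploit the seminorm triangle inequality
\[
\bigl| D_s \tilde v_\lambda(x_\varepsilon) - D_s \tilde u_2(y_\varepsilon) \bigr| \leq D_s(\tilde v_\lambda - \tilde u_2)
\]
(after aligning the two functions by a common translation), and to estimate the residual tail integral using the penalization scales $|x_\varepsilon - y_\varepsilon|$ and $|x_\varepsilon - y_\varepsilon|^2/\varepsilon$. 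Sending $\varepsilon \to 0$, this error vanishes, contradicting the strict slack $-\lambda\gamma$; letting $\lambda \to 0$ concludes $u_1 \leq u_2$ in $\Omega$.
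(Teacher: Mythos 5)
Your plan is correct at the level of the convexity perturbation: replacing $u_1$ by a convex combination with a strict subsolution (your $v_\lambda$, the paper's $u^t=tu_1+(1-t)\rho_1$) to gain strict slack is precisely what the paper does, and the doubling-of-variables skeleton is the same. However, your choice of penalization breaks the argument at the decisive step.

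The fatal problem is the paraboloid $\frac{|x-y|^2}{2\varepsilon}$. Unlike in the local theory, here the penalization appears inside the \emph{nonlocal} gradient: the viscosity inequalities involve $D_s\varphi_1(x_\varepsilon)$ and $D_s\varphi_2(y_\varepsilon)$, where $\varphi_1(x)=u_2(y_\varepsilon)+\frac{|x-y_\varepsilon|^2}{2\varepsilon}$, etc. But $\cB(\varphi_1,\varphi_1)(x_\varepsilon)=\frac{c_{N,s}}{2}\int_{\R^N}\frac{(\varphi_1(x_\varepsilon)-\varphi_1(y))^2}{|x_\varepsilon-y|^{N+2s}}dy$ contains an integrand of order $|x_\varepsilon-y|^{4-N-2s}$ at infinity, so the integral diverges for every $s\in(0,1)$: the quadratic is simply not in $L^1_w$, and $D_s\varphi_1(x_\varepsilon)=+\infty$. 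Your truncation $B_\delta$ / tail-splitting device, which is standard for $(-\Delta)^s$, does not help here because $\cB$ involves \emph{squared} increments, so there is no sign structure to discard the tail. This is why the paper replaces the paraboloid by $\eta\!\left(\tfrac{|x-y|}{\varepsilon^{1/2}}\right)$, where $\eta$ is quadratic near the origin but grows only like $r^\alpha$ with $\alpha<s$ at infinity; this keeps $D_s\varphi_i$ finite, and a Taylor-based splitting then yields the crucial bound $D_s\varphi_1(x_\varepsilon)\le \tilde c\,\varepsilon^{-s/2}\,\mathrm{o}(1)$, from which $|x_\varepsilon-y_\varepsilon|\,D_s\varphi_1(x_\varepsilon)\to 0$.

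Your proposed route for the Hamiltonian difference is also not needed and would not close. Because the penalization depends only on $x-y$, a change of variables shows $D_s\varphi_1(x_\varepsilon)=D_s\varphi_2(y_\varepsilon)$ \emph{exactly} (and likewise $(-\Delta)^s\varphi_1(x_\varepsilon)=(-\Delta)^s\varphi_2(y_\varepsilon)$, which is how the paper passes from the $\sigma=0$ case to $\sigma>0$). There is thus no residual $|D_s\varphi_1-D_s\varphi_2|$ to estimate; only the $x$-dependence of $H$ remains and is absorbed by (H1). Your seminorm triangle inequality $|D_s\tilde v_\lambda - D_s\tilde u_2|\le D_s(\tilde v_\lambda-\tilde u_2)$ is not a usable substitute: if $\tilde v_\lambda,\tilde u_2$ denote the test functions, their $D_s$ are infinite; if they denote $v_\lambda$ and $u_2$ themselves, these are merely continuous and $D_s$ of them need not be defined, and in any case the right-hand side has no reason to be small. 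So, to repair the proof, replace the paraboloid by the growth-controlled $\eta$, deduce exact equality of the nonlocal gradients at the doubled maximum, and prove $|x_\varepsilon-y_\varepsilon|\,D_s\varphi_1(x_\varepsilon)\to 0$ by splitting the $\cB$ integral into $B_2$ and $B_2^c$ and using the rate $|x_\varepsilon-y_\varepsilon|^2/\varepsilon\to 0$.
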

The same result applies if $u_1, u_2$ are sub and super solution of \eqref{LHJ2s} and if $H$ satisfies (H1h) and (H2h) instead of (H1) and (H2).

We point out that Theorem \ref{sconvex} holds in the case $\sigma=0$. To the best of the authors knowledge this is the first result regarding comparison principles for purely nonlocal Hamilton-Jacobi equations and its new even in the case $\sigma>0$, since we are treating general nonlinearities.

It is possible to remove hypothesis (H2) ((H2h) resp.) and the convex hypothesis on $H$ in Theorem \ref{sconvex} whenever we are in presence of a proper term.

\begin{theorem}\label{sproper}
Let $\Omega\subset\R^N$ be a bounded domain. Fix $\mu>0$ and let $u_1, u_2\in C(\bar\Omega)\cap \text{B}(\R^N)$ be respectively sub and super solutions of 
\begin{align}\label{LHproper1}
\mu u+H(x,D_s u)=0
\end{align}
or
\begin{align}\label{LHproper2}
\mu u+H(x,\cB(u,h))=0
\end{align}
with $u_1\leq u_2$ in $\Omega^c$. Furthermore assume that hypothesis \emph{(H1)} holds in case of \eqref{LHproper1} and hypothesis \emph{(H1h)} holds  in case of \eqref{LHproper2}.

Then $u_1\leq u_2$ in $\Omega$.
\end{theorem}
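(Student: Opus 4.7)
The plan is to argue by contradiction via the Crandall--Ishii doubling-of-variables technique. The decisive role is played by the proper term $\m u$: it supplies a strictly positive quantity $\m M$ at the hypothesized maximum which absorbs the nonlocal and spatial error terms, and this is why, in contrast to Theorem \ref{sconvex}, neither (H2)/(H2h) nor convexity of $H$ is required.

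Suppose for contradiction $M:=\max_{\bar\W}(u_1-u_2)>0$. Double variables via
\[
\Phi_\e(x,y) = u_1(x)-u_2(y) - \e^{-2\b}|x-y|^{2\b}
\]
on $\bar\W\times\bar\W$, where $\b\ge 1$ is chosen large enough so that the $2\b$-homogeneous penalty is integrable at zero against the singular kernel $|z|^{-N-2s}$ when used as an inner test function. Let $(x_\e,y_\e)$ be a maximizer; standard arguments yield $\e^{-2\b}|x_\e-y_\e|^{2\b}\to 0$ and, up to subsequence, $(x_\e,y_\e)\to(\hat x,\hat x)$ with $\hat x$ maximizing $u_1-u_2$ on $\bar\W$. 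The boundary condition $u_1\le u_2$ on $\W^c$ forces $\hat x\in\W$, so $x_\e,y_\e\in\W$ for all small $\e$. Using the test functions $\varphi(x):=\e^{-2\b}|x-y_\e|^{2\b}+c_\e$ (touching $u_1$ from above at $x_\e$) and $\psi(y):=-\e^{-2\b}|x_\e-y|^{2\b}+c'_\e$ (touching $u_2$ from below at $y_\e$) in the standard nonlocal hybrid viscosity formulation (test function on a shrinking ball $B_\d$, solution itself outside), the sub- and supersolution inequalities combine to
\[
\m\,(u_1(x_\e) - u_2(y_\e)) \le H(y_\e, p_\e^{2,\d}) - H(x_\e, p_\e^{1,\d}),
\]
where $p_\e^{i,\d}$ denotes the hybrid nonlocal quantity ($D_s u_i$ for \eqref{LHproper1}, $\cB(u_i,h)$ for \eqref{LHproper2}). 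The left-hand side tends to $\m M>0$.

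The main obstacle is to show the right-hand side is $o(1)$ as $\e,\d\to 0$ in an appropriate order. The essential pointwise estimate
\[
u_1(x_\e+z)-u_1(x_\e)\le u_2(y_\e+z)-u_2(y_\e),\quad \forall z\in\R^N,
\]
follows from $\Phi_\e(x_\e+z,y_\e+z)\le \Phi_\e(x_\e,y_\e)$, since the penalty depends only on $x-y$. In the linear case \eqref{LHproper2}, this combined with continuity of $h$ and $x_\e,y_\e\to\hat x$ yields $\cB(u_1,h)(x_\e)-\cB(u_2,h)(y_\e)\to 0$, after which (H1h) closes the gap. In the nonlinear case \eqref{LHproper1}, comparing the Gagliardo-type seminorms $D_s u_i$ at $x_\e$ and $y_\e$ is more delicate: one controls the inner ball $B_\d$ using the smooth penalty (choosing $\b$ large enough to render the contribution $O(\d^{\b-s})$) and the outer complement by dominated convergence, exploiting that $u_i\in C(\R^N)$, $u_i$ is bounded, and $x_\e,y_\e\to\hat x$. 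Invoking (H1) then yields the desired spatial continuity of $H$, forcing the contradiction $\m M\le 0$.
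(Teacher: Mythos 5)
Your proposal diverges from the paper's proof in two places, and each divergence creates a genuine gap.

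\textbf{The penalty function and the viscosity formulation.} You take the penalty $\e^{-2\b}|x-y|^{2\b}$ with $\b$ large and invoke the ``hybrid'' viscosity formulation (test function on $B_\d$, solution itself outside). But the paper's definition of viscosity solution is \emph{global}: the inequality involves $D_s\varphi(x_0)$ for a test function $\varphi$ touching $u$ globally, not a hybrid quantity mixing $\varphi$ near the touching point and $u$ at infinity. The paper even flags (after the definition) that the local/global formulations are known to agree for the diffusion part but does \emph{not} claim this equivalence for the $\cB$-term. With the paper's global formulation, your penalty is inadmissible: $|x-y|^{2\b}$ with $2\b\ge 2$ makes $D_s\varphi_1(x_\e)$ infinite, since $\int_{B_1^c}|z|^{2\cdot 2\b}/|z|^{N+2s}\,dz=\8$. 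The paper sidesteps this by constructing the special penalty $\eta$ in \eqref{eta}: it is $r^2$ near the origin (so it plays the usual quadratic role) but is then matched to a function growing like $r^\a$ with $\a<s$ at infinity, which keeps $D_s\varphi_1$ finite.

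\textbf{The mismatch of nonlocal gradients and the use of (H1).} This is the fatal gap. With your approach the sub/super inequalities produce two \emph{different} nonlocal quantities $p^{1,\d}_\e$ and $p^{2,\d}_\e$, and you then need to show $p^{1,\d}_\e-p^{2,\d}_\e\to 0$ and exploit continuity of $H$ in its last argument. But (H1) gives \emph{no} continuity in the last argument: it only bounds $|H(x,D_s v(x))-H(y,D_s v(x))|$, i.e.\ the spatial oscillation of $H$ with the nonlocal gradient argument \emph{fixed}. The paper's key structural observation is that, precisely because the penalty $\eta$ depends only on $|x-y|$, one has the identity
\[
D_s\varphi_1(x_\e)=D_s\varphi_2(y_\e),
\]
verified by the change of variables displayed in the proof of Theorem \ref{CPC}. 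This makes the last argument of $H$ literally the same on both sides, so (H1) closes the estimate with nothing further. In your scheme this identity fails. Moreover the one-sided inequality $u_1(x_\e+z)-u_1(x_\e)\le u_2(y_\e+z)-u_2(y_\e)$ that you derive cannot control the difference of the \emph{squared} increments defining $D_s u_i$ (the squares destroy monotonicity), and your dominated-convergence step requires $u_i\in C(\R^N)$, whereas the theorem only assumes $u_i\in C(\bar\W)\cap B(\R^N)$. Even if both $D_s$-quantities converged, they would converge to $\cB(u_1,u_1)(\hat x)$ and $\cB(u_2,u_2)(\hat x)$ respectively, which have no reason to coincide. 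For \eqref{LHproper2}, hypothesis (H1h) does permit different transport-term arguments, so your outline is structurally closer to a working proof there, but the integrability and hybrid-formulation objections still apply. In short, the carefully built $\eta$ is not a cosmetic choice: it simultaneously ensures admissibility of the test function and the exact cancellation of nonlocal gradients on which (H1) depends.
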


The last result we present is the comparison principle for sub and super solutions of the parabolic equations
\begin{align}\label{LHJ1sp}
u_t+\sigma(-\Delta)^s u+H(x,t,D_s u)=0,\quad x\in\Omega\times(0,T),
\end{align}
and 
\begin{align}\label{LHJ2sp}
u_t + \sigma(-\Delta)^s u+H(x,t,\cB(u,h))=0, \quad x\in\Omega\times(0,T).
\end{align}
for $\sigma\geq 0$.
As in the local case the term $u_t$ acts as a proper term and therefore the hypotheses on $H$ are in the spirit of Theorem \ref{sproper}.

\begin{theorem}\label{spar}
Let $\sigma\geq 0$ and $u_1, u_2\in C(\bar\Omega[0,T])\cap B(\R^N)$ be respectively sub and super solution of \eqref{LHJ1sp} (\eqref{LHJ2sp} resp.) with $u_1\leq u_2$ in $\Omega^c\times(0,T)\cup \R^N\times\{0\}$.  Assume that $H$ satisfies \emph{(H3)} (\emph{(H3h)} resp.). 

Then $u_1\leq u_2$ in $\Omega\times(0,T)$.
\end{theorem}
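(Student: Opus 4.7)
The plan is to argue by contradiction, following the same scheme as in the elliptic cases (Theorems \ref{sconvex} and \ref{sproper}) but adapted to the parabolic setting, with the time derivative $u_t$ playing the role of the proper term $\mu u$. Suppose $M := \max_{\bar\Omega\times[0,T]}(u_1-u_2) > 0$. I would first perturb the subsolution by setting $\tilde u_1(x,t) := u_1(x,t) - \eta/(T-t)$ for small $\eta > 0$; this makes $\tilde u_1$ a strict subsolution with an extra $\eta/(T-t)^2$ term on the left hand side, and for $\eta$ small enough the supremum $\sup(\tilde u_1 - u_2)$ remains positive and, thanks to the blow-up as $t \to T^-$, is attained at some $\hat t < T$.

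Next I would apply the doubling-of-variables technique in both space and time, working with
\[
\Phi_\epsilon(x,y,t,s) := \tilde u_1(x,t) - u_2(y,s) - \frac{|x-y|^2}{2\epsilon} - \frac{(t-s)^2}{2\epsilon}.
\]
Standard viscosity arguments, together with the hypothesis $u_1 \leq u_2$ on the parabolic boundary $\Omega^c \times (0,T) \cup \R^N \times \{0\}$, yield a maximizer $(x_\epsilon,y_\epsilon,t_\epsilon,s_\epsilon)$ lying in $\Omega\times\Omega\times(0,T)\times(0,T)$ for $\epsilon$ small, with $|x_\epsilon-y_\epsilon|^2/\epsilon + (t_\epsilon-s_\epsilon)^2/\epsilon \to 0$ and $(x_\epsilon,y_\epsilon,t_\epsilon,s_\epsilon) \to (\hat x,\hat x,\hat t,\hat t)$. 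Plugging the usual test functions into the definitions of sub- and supersolution and subtracting, both time derivatives contribute the common quantity $(t_\epsilon - s_\epsilon)/\epsilon$, leaving
\[
\frac{\eta}{(T-t_\epsilon)^2} + \sigma\bigl(\mathcal I_1 - \mathcal I_2\bigr) + H(x_\epsilon, t_\epsilon, A_\epsilon^1) - H(y_\epsilon, s_\epsilon, A_\epsilon^2) \leq 0,
\]
where $\mathcal I_i$ are the fractional Laplacian contributions and $A_\epsilon^i$ denotes $D_s u_i$ (resp.\ $\mathcal B(u_i,h)$), each split into a test-function piece on a small ball and a $u_i$-piece on its complement, as in the proofs of the elliptic theorems.

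The main obstacle, inherited from the stationary problem, is the control of the nonlocal gradient terms $A_\epsilon^1 - A_\epsilon^2$: unlike a local gradient, $D_s u$ and $\mathcal B(u,h)$ depend on global values of $u$, so the standard trick of bounding them by the test function fails. The resolution is to exploit the fact that at the maximizer one has the pointwise inequality $\tilde u_1(x,t_\epsilon) - u_2(y,s_\epsilon) \leq \tilde u_1(x_\epsilon,t_\epsilon) - u_2(y_\epsilon,s_\epsilon) + \text{(penalizations)}$ for all $x,y$, which allows the far-field contributions to $A_\epsilon^1 - A_\epsilon^2$ to be bounded in terms of the penalization and of moduli of continuity of $u_i$ and $h$; this is exactly the mechanism used in Theorem \ref{sproper}, now applied time-slice by time-slice. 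The structure hypothesis (H3) (resp.\ (H3h)) should provide a modulus of continuity in $(x,t)$ of $H$ compatible with the rate $|x_\epsilon-y_\epsilon|^2/\epsilon \to 0$ and $|t_\epsilon-s_\epsilon|^2/\epsilon \to 0$, so that $H(x_\epsilon,t_\epsilon,A_\epsilon^1) - H(y_\epsilon,s_\epsilon,A_\epsilon^2) = o(1)$ as $\epsilon \to 0$. Sending $\epsilon \to 0$ yields $\eta/(T-\hat t)^2 \leq 0$, contradicting $\eta > 0$; letting finally $\eta \to 0$ gives $u_1 \leq u_2$ on $\Omega\times(0,T)$.
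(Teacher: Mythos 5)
There is a genuine gap, and it is the central one for nonlocal Hamilton--Jacobi equations. You use the standard quadratic penalization $|x-y|^2/(2\epsilon)$, but $x\mapsto |x-y|^2$ is not in $L^1_w(\R^N)$ for $s<1$, so it is not an admissible global test function in the paper's viscosity framework (and $D_s$ of it is infinite). The paper avoids this by using the special penalization $\eta(|x-y|/\varepsilon^{1/2})$ defined in \eqref{eta}--\eqref{eta2}: $\eta$ is quadratic near the origin but grows only like $r^\alpha$ with $\alpha<s$ at infinity, which makes the test functions admissible. Moreover, you only use this modification implicitly through ``the usual test functions,'' and then try to estimate the difference $A_\epsilon^1-A_\epsilon^2$ of nonlocal gradients via far-field/local splittings and the maximality of $\Phi_\epsilon$. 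That cannot be made to work under hypothesis (H3): note that (H3) only gives a modulus of continuity of $H$ in its $(x,t)$ variables at a \emph{fixed} value of $D_s v(x)$; it says nothing about how $H$ varies in its last argument. Since each $D_s\varphi_i$ blows up like $\varepsilon^{-s/2}$, any nonzero difference $A_\epsilon^1-A_\epsilon^2$ cannot be absorbed.

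The idea that actually closes the argument in the paper is a structural equality, not an estimate: because $\eta$ is radially symmetric, one has the exact identity
\[
D_s\left(\eta\left(\frac{|\cdot-\bar y|}{\varepsilon_1^{1/2}}\right)\right)(\bar x)
= D_s\left(\eta\left(\frac{|\bar x - \cdot|}{\varepsilon_1^{1/2}}\right)\right)(\bar y),
\]
so the two Hamiltonian terms share the same nonlocal gradient and (H3) applies directly. You need this equality, not a bound on the difference. The same mechanism gives $(-\Delta)^s\varphi_1(\bar x)=(-\Delta)^s\varphi_2(\bar y)$, so the diffusion term also cancels exactly. Finally, your perturbation $u_1-\eta/(T-t)$ is a legitimate alternative to the paper's $u_1-\rho t$, but it does not repair the gap above: once you have the radially symmetric penalization and the equality of nonlocal gradients, the rest of the argument (rates $|\bar x-\bar y|^2/\varepsilon_1\to0$, $|\bar t-\bar\tau|\to0$, and $|\bar x-\bar y|(1+D_s\varphi)\to 0$ using $\alpha<s$) follows exactly as in Theorem \ref{CPC}.
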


We point out that most of the results presented in this work can be extended to more general operators of the form
\[
L_{K_1}u(x)=\text{P.V.}\int_{\R^N}(u(y)-u(x))K_1(x-y)dy.
\]
\[
\cB_K(u,v)(x)=\frac{1}{2}\int_{\R^N}(u(y)-u(x))(v(y)-v(x))K_2(x-y)dy.
\]
and the associated nonlocal gradient $D_s^{K_2}u=\sqrt{\cB_{K_2}}(u,u)$ under the hypothesis that
\[
\frac{\lambda c_{N, s}}{|z|^{N + 2s_i}} \leq K_{i}(z) \leq \frac{\Lambda c_{N, s_i}}{|z|^{N + 2s}}.
\]
for $i=1,2$ and $0<s_i<1$. Note we do not need to require that $s_1\geq s_2$, since the proof is solely based on the Hamiltonian part of the operator.

The paper is organized as follows. In Section \ref{SecPer} we give the precise definition of viscosity solution and the the hypotheses on $H$. In Section \ref{SecComp} we prove Theorem \ref{sconvex} and Theorem \ref{sproper} in the absence of diffusion ($\sigma=0$), as the diffusion is an innocuous term that respects the comparison property. Finally in Section \ref{SecPar} we prove Theorem \ref{spar} again in the particular case $\sigma=0$.


\section{Preliminaries}\label{SecPer}
 
In this section we state the main hypotheses on the Hamiltonian $H$ as well as giving the precise definition of viscosity sub and super solution.

We define the weight 
\[
w(x)=\frac{1}{1+|x|^{N+2s}},
\]
and the space $L^1_w(\R^N)$ to be set of functions such that  $\|uw\|_{L^1(\R^n)}<\infty$. 

From now on $h:\R\to\R$ denotes a smooth function in $L^1_w(\R^N)$ and for the rest of the section we consider a continuous Hamiltonian $H:\R^3\to\R$. Given $\Omega\subseteq\R^N$  we denote $B(\Omega)$ the set of bounded functions in $\Omega$.

We begin with the definition of viscosity sub and super solutions when we have a diffusion term.
\begin{definition}
A function $u:\R^n\to\R$ upper (lower) semicontinuous in $\bar\Omega$ is a viscosity sub solution (super solution) of \eqref{LHJ1s} if for any $\varphi\in C^2(\Omega)\cap L^1_w(\R^N)$ such that $u-\varphi$ has a global maximum at $x_0$ then
\[
\sigma(-\Delta)^s\varphi(x_0)+H(x_0,\varphi(x_0),D_s \varphi(x_0))\leq 0, (\geq 0)
\]

\end{definition}

As observed in \cite{BM} in the absence of the operator $\cB$ the definition can be stated for a larger class of sub and super solutions, by testing the the maximum (minimum) only locally. Furthermore these definitions coincide for general nonlocal diffusion operators, see for example \cite{Arisawa}. 

The definition of sub and super solutions of equation \eqref{LHJ2s}. Meanwhile the definition of sub and super solution for $s$-order Hamilton-Jacobi equations \eqref{HJ1} and 
\begin{align}\label{HJ2}
H(x,\cB(u,h))=0,\quad\text{in }\Omega.
\end{align}
are the same with the minor difference that we require the test function to be only $C^1(\Omega)\cap L^1_w(\R^N)$. 

We point out that the definition in the parabolic case is analogous and will be used in Section \ref{SecPar}. We state it next for completeness.

\begin{definition}
A function $u:\R^n\to\R$ upper (lower) semicontinuous in $\bar\Omega$ is a viscosity sub solution of \eqref{LHJ1sp} if for any $\varphi\in C^{2,1}(\Omega\times(0,T))\cap L^1_w(\R^N)$ such that $u-\varphi$ has a global maximum at $(x_0,t_0)$ then
\[
\varphi_t(x_0,t_0) +\sigma(-\Delta)^s\varphi(x_0,t_0)+H(x_0,t_0,\varphi(x_0,t_0),D_s \varphi(x_0,t_0))\leq 0,
\]

\end{definition}
The definition of sub and super solutions of equation \eqref{LHJ2sp} is analogous. Again, the definition of sub and super solution for $s$-order Hamilton-Jacobi equations \eqref{PHJ1} and \eqref{PHJ2} are the same with the minor difference that we require the test function to be only $C^{1,1}(\Omega\times(0,T))\cap L^1_w(\R^N)$.

Next we state the hypotheses on the Hamiltonian $H$. The first hypothesis is a modulus of continuity for $H$.
\begin{itemize}
\item[(H1)] We say that that the Hamiltonian $H:\R^2\to\R$ satisfies hypothesis (H1) if
there exists a modulus of continuity $\omega_1$ such that 
\begin{align*}
|H(x, D_s v(x))-H(y,D_s v(x))|\leq\omega_1(|x-y|(1+\cB(v)(x))),
\end{align*}
for all $x,y\in\Omega$ and $v\in C^1(\Omega)\cap L^1_w(\R^N)$. 
\item[(H1h)] We say that that the Hamiltonian $H:\R^2\to\R$ satisfies hypothesis (H1h) if there exists a modulus of continuity $\tilde\omega_1$ such that 
\begin{align*}
|H(x,\cB(v,h_1)(x))-H(y,\cB(v,h_2)(y))|\leq\tilde\omega_1(|x-y|+|\cB(v,h_1)(x)-\cB(v,h_2)(y)|),
\end{align*}
for all $x,y\in\Omega$ and $v\in C^1(\Omega)\cap L^1_w(\R^N)$.
\end{itemize}
The next hypothesis is crucial for the comparison principle in the context of convex Hamiltonians, as it guarantees the existence of a strict sub solution. We state it as its needed for Theorem \ref{sconvex}. 
\begin{itemize}
\item[(H2)] We say that that the Hamiltonian $H:\R^2\to\R$ satisfies hypothesis (H2) if, there exists $\rho_1\in C(\bar\Omega)\cap C^2(\Omega)\cap \text{B}(\R^N)$ such that $\rho_1\leq u_1$ in $\R^N$ and 
\[
\sup\limits_{x\in\Omega'}\sigma(-\Delta)^s\rho_1+ H(x,D_s\rho_1(x))<0,  \forall \Omega'\subset\subset\Omega.
\]
\item[(H2h)] We say that that the Hamiltonian $H:\R^2\to\R$ satisfies hypothesis (H2h) if, there exists $\rho_2\in C(\bar\Omega)\cap C^2(\Omega)\cap \text{B}(\R^N)$ such that $\rho_2\leq u_1$ in $\R^N$ and 
\[
\sup\limits_{x\in\Omega'}\sigma(-\Delta)^s\rho_2+ H(x,\cB(\rho_2,h)(x))<0,  \forall \Omega'\subset\subset\Omega.
\]
\end{itemize}
\begin{remark}
Hypothesis \emph{(H2)} (resp. \emph{(H2h)} will be used in the particular case $\sigma=0$, see Theorem \ref{CPC} for more details. In this case one can lower the smoothness hypothesis on $\rho_1$ and $\rho_2$ and just require $\rho_i\in C(\bar\Omega)\cap C^1(\Omega)\cap \text{B}(\R^N)$.
\end{remark}
We will need a particular type of modulus of continuity in the parabolic case in order to prove the comparison theorem.
\begin{itemize}
\item[(H3)] We say that that the Hamiltonian $H:\R^2\to\R$ satisfies hypothesis (H3) if there exists a modulus of continuity $\omega_1$ such that 
\begin{align*}
|H(x,t,D_s v(x))-H(y,s,D_s v(x))|\leq\omega_1(|t-s|, |x-y|(1+\cB(v)(x))),
\end{align*}
for all $x,y\in\Omega$ $t,s\in[0,T]$ and $v\in C^1(\Omega\times[0,T])\cap L^1_w(\R^N)$. 
\item[(H3h)] We say that that the Hamiltonian $H:\R^2\to\R$ satisfies hypothesis (H3h) if there exists a modulus of continuity $\tilde\omega_1$ such that 
\begin{align*}
|H(x,t,\cB(v,h_1)(x))-H(y,s,\cB(v,h_2)(y))|\leq\tilde\omega_1(|t-s|,|x-y|+|\cB(v,h_1)(x)-\cB(v,h_2)(y)|),
\end{align*}
for all $x,y\in\Omega$ $t,s\in[0,T]$ and $v\in C^1(\Omega\times[0,T])\cap L^1_w(\R^N)$. 
\end{itemize}

hypothesis

\section{Comparison Principles}\label{SecComp}

In this section we provide the proof of the Theorem \ref{sconvex} and Theorem \ref{sproper}. We will first prove the result in the absence of diffusion ($\sigma=0$), since the general proof follows the same ideas.

In the case of convex Hamiltonians we have the following theorem.

\begin{theorem}[Comparison Principle for convex Hamiltonians]\label{CPC}
Let $\Omega\subset\R^N$ be a bounded domain. Let $u_1, u_2\in C(\bar\Omega)\cap \text{B}(\R^N)$ be respectively sub and super solutions of \eqref{HJ1} in $\Omega$ with $u_1\leq u_2$ in $\Omega^c$. Suppose $H$ satisfies  hypotheses \emph{H1}, \emph{H2} and furthermore assume that the function $\bar u\rightarrow H(x,\bar u)$ is convex for each $x\in\Omega$.

Then $u_1\leq u_2$ in $\Omega$.
\end{theorem}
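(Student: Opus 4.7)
The plan is to argue by contradiction via a doubling of variables, after upgrading $u_1$ to a strict subsolution using the convexity of $H$ and hypothesis (H2).

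Suppose $M := \max_{\bar\Omega}(u_1 - u_2) > 0$; by the boundary condition the maximum is attained at an interior point $\bar x \in \Omega$. For $\theta \in (0,1)$ small set $u_1^\theta := \theta \rho_1 + (1-\theta) u_1$, so that $M_\theta := \max_{\bar\Omega}(u_1^\theta - u_2)$ remains positive and is attained in $\Omega$. The first substantive step is to verify that $u_1^\theta$ is a \emph{strict} subsolution on every $\Omega' \subset\subset \Omega$: if $\varphi \in C^1(\Omega) \cap L^1_w(\R^N)$ touches $u_1^\theta$ from above at $x_0 \in \Omega'$, then $\psi := (\varphi - \theta\rho_1)/(1-\theta)$ is an admissible test function that touches $u_1$ from above at $x_0$; Cauchy--Schwarz applied to the bilinear form $\cB$ yields the sub-additivity $D_s \varphi(x_0) \le \theta D_s\rho_1(x_0) + (1-\theta) D_s\psi(x_0)$, and the convexity (together with monotonicity in the non-negative variable $D_s v$) of $H(x_0, \cdot)$ combined with (H2) then gives $H(x_0, D_s\varphi(x_0)) \le -\theta\varepsilon$ for some $\varepsilon = \varepsilon(\Omega') > 0$.

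Next, let $(x_\delta, y_\delta)$ maximize $\Psi_\delta(x,y) := u_1^\theta(x) - u_2(y) - |x-y|^2/(2\delta)$ on $\bar\Omega \times \bar\Omega$. Standard estimates give $(x_\delta, y_\delta) \to (\bar x, \bar x) \in \Omega$ and $|x_\delta - y_\delta|^2/\delta \to 0$. Since the raw quadratic penalty is not in $L^1_w$, I would construct admissible test functions $\varphi, \psi \in C^1 \cap L^1_w$ that coincide with the quadratic on a ball $B_r(x_\delta)$ (respectively $B_r(y_\delta)$) and smoothly interpolate to a constant outside, with that constant chosen large (resp.\ small) enough to enforce $\varphi \ge u_1^\theta$ and $\psi \le u_2$ globally with equalities at $x_\delta$ and $y_\delta$. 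Plugging into the strict sub/supersolution inequalities yields
\[
H(x_\delta, D_s\varphi(x_\delta)) \le -\theta\varepsilon, \qquad H(y_\delta, D_s\psi(y_\delta)) \ge 0.
\]

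To conclude, I would split
\[
\theta\varepsilon \le \bigl[H(y_\delta, D_s\psi(y_\delta)) - H(y_\delta, D_s\varphi(x_\delta))\bigr] + \bigl[H(y_\delta, D_s\varphi(x_\delta)) - H(x_\delta, D_s\varphi(x_\delta))\bigr];
\]
the second bracket is controlled by (H1), whose modulus argument is $|x_\delta - y_\delta|(1 + \cB(\varphi)(x_\delta))$, and the first bracket by continuity of $H$ in its second argument combined with the matching $D_s\varphi(x_\delta) - D_s\psi(y_\delta) \to 0$. The main obstacle is precisely this analytic matching: unlike the local case where $\nabla\varphi(x_\delta) = \nabla\psi(y_\delta)$, the two nonlocal gradients are genuinely distinct integral quantities and depend delicately on the truncation radius $r$. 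My plan is to expand $\cB(\varphi)(x_\delta)$ and $\cB(\psi)(y_\delta)$ using the change of variables $z \mapsto z - (x_\delta - y_\delta)$, observe that the leading contributions $(p\cdot\xi)^2$ with $p = (x_\delta - y_\delta)/\delta$ agree, and then choose $r = r(\delta)$ to balance the local part of $\cB$, of order $|p|^2 r^{2-2s}$, against the tail, of order $r^{-2s}$, so that both the modulus argument in (H1) and the difference $D_s\varphi(x_\delta) - D_s\psi(y_\delta)$ vanish as $\delta \to 0$ by virtue of $|x_\delta - y_\delta|^2/\delta \to 0$, producing the desired contradiction.
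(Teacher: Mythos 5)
Your strategy is the right one — upgrade $u_1$ to a strict subsolution by convexity plus (H2), then double variables and close with (H1) — and it matches the paper's overall plan. But there is a genuine gap at the doubling-of-variables step, and you have put your finger on it without resolving it: the matching of the two nonlocal gradients.

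The paper sidesteps the $L^1_w$ admissibility issue not by truncating the quadratic but by replacing it globally with a radially symmetric profile $\eta$ that equals $r^2$ near the origin and grows like $r^\alpha$ at infinity with $\alpha<s$, so $\eta(|\cdot|/\varepsilon^{1/2})\in L^1_w(\R^N)$ without any cutoff. The crucial payoff is that with
$\varphi_1(x)=u_2(y_\varepsilon)+\eta(|x-y_\varepsilon|/\varepsilon^{1/2})$ (radial about $y_\varepsilon$) and
$\varphi_2(y)=u^{\bar t}(x_\varepsilon)-\eta(|x_\varepsilon-y|/\varepsilon^{1/2})$ (radial about $x_\varepsilon$), the substitution $z\mapsto -z$ in the $\cB$-integrals gives $D_s\varphi_1(x_\varepsilon)=D_s\varphi_2(y_\varepsilon)$ \emph{exactly}, so the first bracket in your splitting is identically zero and no cancellation needs to be tracked. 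All that remains is to show $|x_\varepsilon-y_\varepsilon|D_s\varphi_1(x_\varepsilon)\to 0$, which follows from the $I_1/I_2$ decomposition and $|x_\varepsilon-y_\varepsilon|^2/\varepsilon\to 0$.

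Your construction — quadratic inside $B_r(x_\delta)$ for $\varphi$, $B_r(y_\delta)$ for $\psi$, constant outside — destroys this symmetry: $\varphi$ is no longer a radial function of $|x-y_\delta|$ once you cut off about $x_\delta$, and the constant values at infinity for $\varphi$ and $\psi$ differ (one is pushed above $u_1^\theta$, the other below $u_2$). Then $D_s\varphi(x_\delta)$ and $D_s\psi(y_\delta)$ are two quantities that both diverge like $|p|^s\sim(|x_\delta-y_\delta|/\delta)^s$ (your own dimensional count), and you need their \emph{difference} to vanish. The proposed balancing of $r(\delta)$ controls the \emph{size} of each gradient (which is enough for the (H1) bracket) but gives no cancellation in the difference, and "continuity of $H$ in its second argument" is useless when both arguments diverge. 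So the first bracket in your splitting is not controlled. To fix this you would either have to mimic the paper's global $\eta$ (center the profile at $y_\delta$ for $\varphi$ and at $x_\delta$ for $\psi$, use the \emph{same} radial profile in $L^1_w$ with no cutoff, and invoke the $z\mapsto -z$ symmetry), or produce a quantitative cancellation estimate for the difference of the two truncated gradients, which your sketch does not supply. Separately, a minor point worth recording: your Cauchy--Schwarz/Minkowski step produces only $D_s\varphi(x_0)\le \theta D_s\rho_1(x_0)+(1-\theta)D_s\psi(x_0)$, so passing to $H$ requires not just convexity but also monotonicity of $H(x,\cdot)$; you noted this, and indeed the paper uses the same inequality silently.
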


\begin{proof}
The proof is based on the comparison principle for viscosity solutions of first order equations, see for example \cite{Barles}. 

Let $t\in(0,1)$ and consider the auxiliary function 
\[
u^t(x)=tu_1(x)+(1-t)\rho_1(x).
\]
Note that any test function of $\varphi$ of $u^t$ at $x_0$ can be rewritten as 
\[
\varphi=t\tilde\varphi+(1-t)\rho_1
\]
where $\tilde\varphi$ is a test function of $u_1$ at the same point $x_0$. Now, thanks to the convexity hypothesis we have 
\begin{align*}
H(x_0,D_s \varphi(x_0))&=H(x_0,D_s(t\tilde\varphi+(1-t)\rho_1)(x_0))\\
&\leq tH(x_0,D_s\tilde\varphi(x_0))+(1-t)H(x_0,D_s\rho_1(x_0),
\end{align*}
and since $u_1$ is a sub solution and $\tilde\varphi$ is a test function for $u_1$ we deduce
\[
H(x_0,D_s \varphi(x_0))\leq (1-t)H(x_0,D_s\rho_1(x_0)).
\]
This implies $u^t$ is a sub solution 
\begin{align}\label{aux}
H(x,D_su^t)=f^t(x),
\end{align}
where $f^t(x)=(1-t)H(x,D_s\rho_1(x))$ is a continuous function, since $\rho_1\in C(\bar\Omega)\cap C^1(\Omega)\cap \text{B}(\R^N)$. We also have that $u^t\leq u_2$ in $\Omega^c$. 

Next we prove that $u^t\leq u_2$ in $\R^N$ for all $t\in(0,1)$. We proceed by contradiction and assume that for some $\bar t$ fixed there exists $\delta>0$ such that
\[
\sup\limits_{\Omega}(u^{\bar t}-u_2)=\delta.
\]
We now proceed we standard method of doubling variables. Consider the function
\[
\Phi_\varepsilon(x,y)=u^{\bar t}(x)-u_2(y)-\eta\left(\frac{|x-y|}{\varepsilon^{1/2}}\right)
\]
where $\eta$ is a smooth positive function given by
\begin{align}\label{eta}
\eta(r)=\begin{cases} r^2 &\mbox{if } |r|\leq (3\|u_2\|_{L^\infty(\R^N)|})^{1/2} \\ 
\bar\eta & \mbox{if } |r|>1 \end{cases}
\end{align}
and $\bar \eta $ is a smooth symmetric increasing function growing like $r^\alpha$ when $r\to\infty $ with $\alpha< s$. Define 
\[
\Omega'=\{x\in\Omega\ \text{s.t. } (u^{\bar t}-u_2)>\delta/2\}.
\]
Since $u_2\geq u ^{\bar t}$ in $\Omega^c$ it is clear that $\bar\Omega'\subset\Omega$. Let $(x_\varepsilon,y_\varepsilon)$ be a maximum point of $\Phi_\varepsilon$ on $\bar\Omega'\times \bar\Omega'$. Note that since $\eta\geq0$ we have
\[
\Phi_\varepsilon(x_\varepsilon,y_\varepsilon)\leq \Phi_\varepsilon(x_\varepsilon,x_\varepsilon),
\]
which implies 
\[
\eta\left(\frac{|x-y|}{\varepsilon^{1/2}}\right)\leq u_2(x_\varepsilon)-u_2(y_\varepsilon).
\]
Using the definition of $\eta$ we deduce
\[
|x-y|^2\leq 2\|u\|_{L^{\infty}(\R^N)}\varepsilon.
\]
We deduce then that $|x_\varepsilon-y\varepsilon|\to 0$ as $\varepsilon\to 0^+$. This combined with the continuity of $u_2$ implies
\begin{align}\label{rate}
\frac{|x_\varepsilon-y_\varepsilon|^2}{\varepsilon}\to 0,\quad \text{as }\varepsilon\to 0^+.
\end{align}
Observe also that by the contradiction hypothesis and the definition of $\Omega'$
\[
\Phi_\varepsilon(x_\varepsilon,y_\varepsilon)\geq\sup\limits_{x\in\Omega'}\Phi_\varepsilon(x,x)=\delta.
\]
We claim that $(x_\varepsilon,y_\varepsilon)\in\Omega'\times\Omega'$ for $\varepsilon$ small. If not, then either $x_\varepsilon$ or $y_\varepsilon$ belongs to $\partial\Omega'$, therefore
\begin{align}
\frac{\delta}{2}u^{\bar t}(x_\varepsilon)-u_2(\varepsilon)&\geq\Phi_\varepsilon(x_\varepsilon, y_\varepsilon)+u_2(y_\varepsilon)-u_2(x_\varepsilon)\\
&\geq\delta-\omega(|x_\varepsilon-y_\varepsilon|),
\end{align}
where $\omega$ is a modulus of continuity of $u_2$, which leads to a contradiction for $\varepsilon$ small enough since $x_\varepsilon\to y_\varepsilon$ as $\varepsilon\to 0^+$.

Let
\[
\varphi_1(x)=u_2(y_\varepsilon)+\eta\left(\frac{|x-y_\varepsilon|}{\varepsilon^{1/2}}\right).
\]

Observe that since $x_\varepsilon$ is a maximum of $\Phi(x,y_\varepsilon)$, we have that $x_\varepsilon$ is a maximum of $u^{\bar t}(x)-\varphi_1(x)$. Moreover by taking small enough we can guarantee that the maximum is global. Since $u^{\bar t}$ is a viscosity sub solution of \eqref{aux} we obtain 
\[
H(x_\varepsilon, D_s\varphi_1(x_\varepsilon))-f^{\bar t}(x_\varepsilon)\leq0.
\] 
Define now
\[
\varphi_2(y)=u^{\bar t}(x_\varepsilon)-\eta\left(\frac{|x_\varepsilon-y|}{\varepsilon^{1/2}}\right),
\]
and reasoning as before we have that $y_\varepsilon$ is a minimum of $u_2(y)-\varphi_2(y)$ for $\varepsilon$ small. Since $u_2$ is a viscosity super solution of \eqref{HJ1} we obtain
\[
H(y_\varepsilon,D_s\varphi_2(y_\varepsilon))\geq 0.
\] 
Furthermore we have
\begin{align*}
(D_s \varphi_1)^2(x_\varepsilon)&=\frac{c_{N,s}}{2}\int_{\R^N}\frac{(\varphi_1(x_\varepsilon)-\varphi_1(y))^2}{|x_\varepsilon-y|^{N+2s}}dy\\
&=\frac{c_{N,s}}{2}\int_{\R^N}\frac{(\varphi_1(x_\varepsilon)-\varphi_1(y))^2}{|y-x_\varepsilon|^{N+2s}}dy\\
&=\frac{c_{N,s}}{2}\int_{\R^N}\frac{(\varphi_1(x_\varepsilon)-\varphi_1(z+x_\varepsilon))^2}{|z|^{N+2s}}dy\\
&=\frac{c_{N,s}}{2}\int_{\R^N}\frac{(\eta(|x_\varepsilon-y_\varepsilon|\varepsilon^{-1/2})-\eta(|z+x_\varepsilon-y_\varepsilon|\varepsilon^{-1/2}))^2}{|z|^{N+2s}}dy,
\end{align*}
and on the other hand
\begin{align*}
(D_s \varphi_2)^2(y_\varepsilon)&=\frac{c_{N,s}}{2}\int_{\R^N}\frac{(\varphi_2(y_\varepsilon)-\varphi_2(y))^2}{|y_\varepsilon-y|^{N+2s}}dy\\
&=\frac{c_{N,s}}{2}\int_{\R^N}\frac{(\varphi_2(y_\varepsilon)-\varphi_2(y))^2}{|y-y_\varepsilon|^{N+2s}}dy\\
&=\frac{c_{N,s}}{2}\int_{\R^N}\frac{(\varphi_2(y_\varepsilon)-\varphi_1(z+y_\varepsilon))^2}{|z|^{N+2s}}dy\\
&=\frac{c_{N,s}}{2}\int_{\R^N}\frac{(\eta(|x_\varepsilon-y_\varepsilon|\varepsilon^{-1/2})-\eta(|z+x_\varepsilon-y_\varepsilon|\varepsilon^{-1/2}))^2}{|z|^{N+2s}}dy,
\end{align*}
and therefore we deduce $D_s\varphi_1(x_\varepsilon)=D_s\varphi_2(y_\varepsilon)$.

Thanks to hypothesis (H2), there is $\alpha>0$ such that
\[
H(x,D_s\rho_1)\leq -\alpha,
\]
and since $1>\bar t>0$ this implies 
\[
f^{\bar t}(x)=(1-t)H(x,D_s\rho_1)\leq \bar\alpha<0.
\]
Combining the previous inequalities we get and using hypothesis \textit{i.-}
\begin{align}\label{contradiction}
\bar\alpha &\leq H(y_\varepsilon,D_s\varphi_2(y_\varepsilon))-H(x_\varepsilon,D_s\varphi_1(x_\varepsilon))\notag\\
&=H(y_\varepsilon,D_s\varphi_1(x_\varepsilon))-H(x_\varepsilon,D_s\varphi_1(x_\varepsilon))\notag\\
&\leq \omega_1(|x_\varepsilon-y_\varepsilon|(1+D_s\varphi_1(x_\varepsilon)).
\end{align}
In order to arrive to a contradiction we need to prove that $|x_\varepsilon-y_\varepsilon|D_s\varphi_1(x_\varepsilon)$ converges to 0 as $\varepsilon\to 0^+$. Denote $w=x_\varepsilon-y_\varepsilon$, we have
\begin{align*}
(D_s\varphi_1)^2(x_\varepsilon)&=\frac{c_{N,s}}{2}\int_{\R^N}\left(\eta\left(\frac{|w|}{\varepsilon^{1/2}}\right)-\eta\left(\frac{|z+w|}{\varepsilon^{1/2}}\right)\right)^2\frac{1}{|z|^{N+2s}}dz\\
&=I_1+I_2,
\end{align*}
where
\begin{align*}
I_1&=\frac{c_{N,s}}{2}\int_{B_2^c}\left(\eta\left(\frac{|w|}{\varepsilon^{1/2}}\right)-\eta\left(\frac{|z+w|}{\varepsilon^{1/2}}\right)\right)^2\frac{1}{|z|^{N+2s}}dz\\
I_2&=\frac{c_{N,s}}{2}\int_{B_2}\left(\eta\left(\frac{|w|}{\varepsilon^{1/2}}\right)-\eta\left(\frac{|z+w|}{\varepsilon^{1/2}}\right)\right)^2\frac{1}{|z|^{N+2s}}dz
\end{align*}
Let us analyze $I_1$. We have $\eta(|w+z|)\geq\eta(|w|)\geq0$ for all $|z|>2$, therefore
\begin{align*}
I_1&=\frac{c_{N,s}}{2}\int_{B_2^c}\left(\eta\left(\frac{|w|}{\varepsilon^{1/2}}\right)-\eta\left(\frac{|z+w|}{\varepsilon^{1/2}}\right)\right)^2\frac{1}{|z|^{N+2s}}dz\\
&\leq \frac{c_{N,s}}{2}\int_{B_2^c}\eta^2\left(\frac{|z+w|}{\varepsilon^{1/2}}\right)\frac{1}{|z|^{N+2s}}dz\\
&\leq C\frac{c_{N,s}}{\varepsilon^\alpha}\int_{B_2^c}\frac{\eta^2(|z+w|)}{|z|^{N+2s}}dz\\
&=C\frac{c_{N,s}}{\varepsilon^\alpha},
\end{align*}
where $C$ is a universal constant. 

For $I_2$ we proceed with a Taylor expansion for $\eta$ to deduce
\begin{align*}
I_2&=\frac{c_{N,s}}{2}\int_{B_2}\left(\eta\left(\frac{|w|}{\varepsilon^{1/2}}\right)-\eta\left(\frac{|z+w|}{\varepsilon^{1/2}}\right)\right)^2\frac{1}{|z|^{N+2s}}dz\\
&= \frac{c_{N,s}}{2\varepsilon^s}\int_{B_2}\left(\eta\left(\frac{|w|}{\varepsilon^{1/2}}\right)-\eta\left(\left|z+\frac{w}{\varepsilon^{1/2}}\right|\right)\right)^2\frac{1}{|z|^{N+2s}}dz\\
&=\frac{c_{N,s}}{2\varepsilon^s}\int_{B_2}\frac{(D\eta(w\varepsilon^{-1/2})\cdot z+\text{o}(|z|^2))^2}{|z|^{N+2s}}dz\\
&\leq C \frac{c_{N,s}}{\varepsilon^s}\int_{B_2}\frac{|D\eta(w\varepsilon^{-1/2})|^2|z|^2+|D\eta(w\varepsilon^{-1/2})|\text{o}(|z|^3)+\text{o}(|z|^4)}{|z|^{N+2s}}dz.
\end{align*}
for $C$ some universal constant. Since $D\eta (w\varepsilon^{-1/2})=2w\varepsilon^{-1/2}$ we deduce
\[
I_2\leq C \frac{c_{N,s}}{2\varepsilon^s}\left(\frac{|w|^2}{\varepsilon}+\frac{|w|}{\varepsilon^{-1/2}}+1\right).
\]
Recall that $|w|^2/\varepsilon\to 0$ as $\varepsilon\to 0$ (see \eqref{rate}) we get 
\[
I_2\leq C \frac{c_{N,s}}{2\varepsilon^s}\text{o(1)},
\]
and since $\alpha<s$ we get that $D_s\varphi_1(x_\varepsilon)$ is dominated by the bounds of $I_2$ that is
\begin{align}\label{controlgradnl}
D_s\varphi_1(x_\varepsilon)\leq \tilde{c}\frac{1}{\varepsilon^{s/2}}\text{o(1)},
\end{align}
where $\tilde c=Cc^{1/2}_{N,s}$. Finally, thanks to \eqref{rate} we have $|w|^2/\varepsilon\to 0$ and so we get
\begin{align*}
|w|D_s\varphi_1(x_\varepsilon)&\leq  \tilde c \frac{|w|}{\varepsilon^{s/2}}\\
&=\tilde c \frac{|w|^{s}}{\varepsilon^{s/2}}|w|^{1-s}\to 0\quad \text{as }\varepsilon \to 0,
\end{align*}
since  $s\in(0,1)$. This contradicts \eqref{contradiction} and finishes the proof.
\end{proof}

\begin{remark}
Observe that hypothesis \emph{(H2)} is trivially satisfied for Hamiltonians of the form
\[
H(x,D_s u)=|D_s u|^p-f(x),
\]
where $f(x)>0$. In particular there exists, for each $\sigma\geq0$ at most one viscosity solution of
\[
\sigma(-\Delta)^su+|D_s u|-f(x)=0.
\]
\end{remark}

We now proof the comparison principle for equations depending on the transport term $\cB(\cdot, h)$. The proof utilizes the same ideas as in Theorem \ref{CPC}, although we need to take special care in order to bound the error in the modulus of continuity.

\begin{corollary}\label{CPCH}
Let $\Omega\subset\R^N$ be a bounded domain. Let $u_1, u_2\in C(\bar\Omega)\cap \text{B}(\R^N)$ be respectively sub and super solutions of \eqref{HJ2} in $\Omega$ with $u_1\leq u_2$ in $\Omega^c$. Suppose $H$ satisfies  hypotheses \emph{(H1h)}, \emph{(H2h)} and furthermore assume that the function $\bar u\rightarrow H(x,\bar u)$ is convex for each $x\in\Omega$.

Then $u_1\leq u_2$ in $\Omega$.
\end{corollary}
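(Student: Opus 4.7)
The plan is to follow the blueprint of Theorem \ref{CPC}, replacing $D_s\varphi$ by $\cB(\varphi,h)$ throughout. Using hypothesis (H2h), I set
\[
u^t(x)=tu_1(x)+(1-t)\rho_2(x),\quad t\in(0,1),
\]
and by convexity of $H(x,\cdot)$ verify that $u^t$ is a viscosity subsolution of $H(x,\cB(u,h))=f^t(x)$, where $f^t(x)=(1-t)H(x,\cB(\rho_2,h)(x))\le-\bar\alpha<0$ uniformly on compact subsets of $\Omega$. Arguing by contradiction I assume $\sup_\Omega(u^{\bar t}-u_2)=\delta>0$ and run the same doubling-of-variables procedure with the penalty $\eta(|x-y|/\varepsilon^{1/2})$ to obtain a maximum point $(x_\varepsilon,y_\varepsilon)\in\Omega'\times\Omega'$ with $w:=x_\varepsilon-y_\varepsilon\to 0$ and $|w|^2/\varepsilon\to 0$. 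The test functions $\varphi_1(x)=u_2(y_\varepsilon)+\eta(|x-y_\varepsilon|/\varepsilon^{1/2})$ and $\varphi_2(y)=u^{\bar t}(x_\varepsilon)-\eta(|x_\varepsilon-y|/\varepsilon^{1/2})$ then yield the viscosity inequalities
\[
H(x_\varepsilon,\cB(\varphi_1,h)(x_\varepsilon))\le-\bar\alpha,\qquad H(y_\varepsilon,\cB(\varphi_2,h)(y_\varepsilon))\ge 0.
\]

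The essential new point is that $\cB(\varphi_1,h)(x_\varepsilon)\ne\cB(\varphi_2,h)(y_\varepsilon)$: unlike the symmetric $D_s$, the bilinear form $\cB(\cdot,h)$ is not translation invariant, because $h$ is fixed in space. Writing $\psi(\xi)=\eta(|\xi|/\varepsilon^{1/2})$ and performing the substitutions $y=x_\varepsilon+z$ in the first integral and $y=y_\varepsilon+z$ followed by $z\mapsto-z$ in the second, I will obtain
\[
\cB(\varphi_1,h)(x_\varepsilon)-\cB(\varphi_2,h)(y_\varepsilon)=\frac{c_{N,s}}{2}\int_{\R^N}\frac{[\psi(w)-\psi(w+z)]\,A(z)}{|z|^{N+2s}}\,dz,
\]
where $A(z)=[h(x_\varepsilon)-h(x_\varepsilon+z)]+[h(y_\varepsilon)-h(y_\varepsilon-z)]$. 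Two mean-value expansions of $h$ combine to give $A(z)=-[\nabla h(x_\varepsilon)-\nabla h(y_\varepsilon)]\cdot z+O(|z|^2)$, so the smoothness of $h$ produces the crucial bound $|A(z)|\le C(|w||z|+|z|^2)$ on compact ranges, where the factor $|w|$ is precisely what arises from $x_\varepsilon-y_\varepsilon\to 0$.

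The hard part will be to show that the above integral tends to $0$ as $\varepsilon\to 0^+$. I plan to split the domain into the quadratic regime $|z|\le\varepsilon^{1/2}$, an intermediate annulus, and the far field $|z|>1$. In the quadratic regime one has exactly $\psi(w)-\psi(w+z)=-(2w\cdot z+|z|^2)/\varepsilon$; the resulting terms in $[\psi(w)-\psi(w+z)]A(z)$ are either odd in $z$ (and hence vanish against the symmetric kernel) or contribute quantities of order $|w|^2/\varepsilon^s$ and $\varepsilon^{1-s}$, both $o(1)$ thanks to $|w|^2/\varepsilon\to 0$ and $s<1$. On the intermediate and far-field regions I will apply Cauchy--Schwarz, using the bound $D_s\varphi_1(x_\varepsilon)=O(\varepsilon^{-s/2})$ already established in the proof of Theorem \ref{CPC} together with the smoothness and global boundedness of $A$ (the latter afforded by $h\in L^1_w(\R^N)$ and its smoothness), to close the estimate. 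Once $|\cB(\varphi_1,h)(x_\varepsilon)-\cB(\varphi_2,h)(y_\varepsilon)|\to 0$ is in hand, the modulus (H1h) yields
\[
|\bar\alpha|\le H(y_\varepsilon,\cB(\varphi_2,h)(y_\varepsilon))-H(x_\varepsilon,\cB(\varphi_1,h)(x_\varepsilon))\le\tilde\omega_1\big(|w|+|\cB(\varphi_1,h)(x_\varepsilon)-\cB(\varphi_2,h)(y_\varepsilon)|\big)\to 0,
\]
the desired contradiction.
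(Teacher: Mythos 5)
Your derivation of the kernel factor $A(z)=[h(x_\varepsilon)-h(x_\varepsilon+z)]+[h(y_\varepsilon)-h(y_\varepsilon-z)]$ is correct: substituting $y=y_\varepsilon+z$ in $\cB(\varphi_2,h)(y_\varepsilon)$ produces $\eta(|w-z|/\varepsilon^{1/2})$ together with $h(y_\varepsilon)-h(y_\varepsilon+z)$, and the change $z\mapsto -z$ that turns $\eta(|w-z|/\varepsilon^{1/2})$ into $\eta(|w+z|/\varepsilon^{1/2})$ must simultaneously turn $h(y_\varepsilon+z)$ into $h(y_\varepsilon-z)$. The paper instead arrives at the factor $h(x_\varepsilon)-h(z+x_\varepsilon)-h(y_\varepsilon)+h(z+y_\varepsilon)$, i.e.\ a \emph{difference} $[h(x_\varepsilon)-h(x_\varepsilon+z)]-[h(y_\varepsilon)-h(y_\varepsilon+z)]$, which is what you get only if one relabels $z\mapsto -z$ inside $\eta$ but not inside $h$. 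You should be aware of this discrepancy, because it is exactly what makes the paper's estimate close: the paper's factor is, by the mean value theorem in the $x_\varepsilon$ vs.\ $y_\varepsilon$ slot, bounded by $C\min(1,|z|)\,|x_\varepsilon-y_\varepsilon|$ for \emph{all} $z$, so that $H:=\int A^2|z|^{-N-2s}dz\le C|w|^2$ and a single Cauchy--Schwarz gives $|\cB(\varphi_2,h)(y_\varepsilon)-\cB(\varphi_1,h)(x_\varepsilon)|\le C\,D_s\varphi_1(x_\varepsilon)\,|w|\to 0$, with no splitting into regimes at all.

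Your $A(z)$, by contrast, contains the centered second difference $A_2(z):=2h(y_\varepsilon)-h(y_\varepsilon+z)-h(y_\varepsilon-z)$, which does \emph{not} vanish as $w\to 0$: it is $O(|z|^2)$ near $z=0$ but only $O(1)$ for $|z|\gtrsim 1$, independently of $|w|$. Consequently $\int_{|z|>\varepsilon^{1/2}}A(z)^2|z|^{-N-2s}\,dz$ is of order $1$, not of order $|w|^2$, and the Cauchy--Schwarz bound you propose on the intermediate and far-field regions yields $D_s\varphi_1(x_\varepsilon)\cdot O(1)=O(\varepsilon^{-s/2})$, which diverges rather than tending to zero. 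The ``smoothness and global boundedness of $A$'' that you invoke give you $O(1)$, not the $O(|w|)$ you need. (Indeed, in the far field $\psi(w)-\psi(w+z)\approx -\bar\eta(|z|/\varepsilon^{1/2})\approx -|z|^\alpha\varepsilon^{-\alpha/2}$, so the $A_2$ contribution $\int_{|z|>1}(\psi(w)-\psi(w+z))A_2(z)|z|^{-N-2s}dz\sim -\varepsilon^{-\alpha/2}\int_{|z|>1}A_2(z)|z|^{\alpha-N-2s}dz$ genuinely blows up unless $A_2\equiv 0$, e.g.\ $h$ affine.) So the proposal's treatment of $|z|\le\varepsilon^{1/2}$ is fine, but the far-field step would fail with the $A$ you (correctly) derived; this is the genuine gap, and you cannot close it simply by citing $D_s\varphi_1=O(\varepsilon^{-s/2})$ and boundedness of $A$.
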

\begin{proof}
We only provide the proof in the case $\sigma=0$, that is for equation \eqref{HJ2}. The proof is the same as in Theorem \ref{CPC}, up to the point of the definition of $\varphi_1,\varphi_2$, where we have 
\[
H(x_\varepsilon, \cB(\varphi_1,h)(x_\varepsilon))-f^{\bar t}(x_\varepsilon)\leq0, \quad \text{and } H(y_\varepsilon,\cB(\varphi_2,h))(y_\varepsilon)\geq 0,
\]
and 
\[
f^{\bar t}(x)=(1-t)H(x,\cB(\rho_2,h)).
\]
Now, 
\begin{align*}
\cB(\varphi_1,h)(x_\varepsilon)&= \frac{c_{N,s}}{2}\int_{\R^N}\frac{(\varphi_1(x_\varepsilon)-\varphi_1(y))(h(x_\varepsilon)-h(y))}{|x_\varepsilon-y|^{N+2s}}dy\\
&= \frac{c_{N,s}}{2}\int_{\R^N}\frac{(\varphi_1(x_\varepsilon)-\varphi_1(z+x_\varepsilon))(h(x_\varepsilon)-h(z+x_\varepsilon))}{|z|^{N+2s}}dz\\
&= \frac{c_{N,s}}{2}\int_{\R^N}\frac{(\eta(|x_\varepsilon-y_\varepsilon|\varepsilon^{-1/2})-\eta(|z+x_\varepsilon-y_\varepsilon|\varepsilon^{-1/2}))(h(x_\varepsilon)-h(z+x_\varepsilon))}{|z|^{N+2s}}dz,
\end{align*}
and on the other hand
\begin{align*}
\cB(\varphi_2,h)(y_\varepsilon)&=\frac{c_{N,s}}{2}\int_{\R^N}\frac{(\varphi_2(y_\varepsilon)-\varphi_2(y))(h(y_\varepsilon)-h(y))}{|y_\varepsilon-y|^{N+2s}}dy\\
&= \frac{c_{N,s}}{2}\int_{\R^N}\frac{(\varphi_2(y_\varepsilon)-\varphi_2(z+y_\varepsilon))(h(y_\varepsilon)-h(z+y_\varepsilon))}{|z|^{N+2s}}dz\\
&= -\frac{c_{N,s}}{2}\int_{\R^N}\frac{(\eta(|x_\varepsilon-y_\varepsilon|\varepsilon^{-1/2})-\eta(|z+x_\varepsilon-y_\varepsilon|\varepsilon^{-1/2}))(h(y_\varepsilon)-h(z+y_\varepsilon))}{|z|^{N+2s}}dz
\end{align*}
Thanks to hypothesis \textit{iii.-}, there is $\alpha>0$ such that
\[
H(x,\cB(\rho_2,h))\leq -\alpha,
\]
and since $1>\bar t>0$ this implies 
\[
f^{\bar t}(x)=(1-t)H(x,\cB(\rho_2,h))\leq -\bar\alpha<0.
\]
Combining the previous inequalities we get and using hypothesis \textit{i.-} 
\begin{align*}
\bar\alpha &\leq H(y_\varepsilon,\cB(\varphi_2,h)(y_\varepsilon))-H(x_\varepsilon,\cB(\varphi_1,h)(x_\varepsilon))\\
&\leq \tilde\omega_1(|x_\varepsilon-y_\varepsilon|+|\cB(\varphi_2,h)(y_\varepsilon)-\cB(\varphi_1,h)(x_\varepsilon)|).
\end{align*}
Now the H\"older inequality implies,
\begin{align*}
&\cB(\varphi_2,h)(y_\varepsilon)-\cB(\varphi_1,h)(x_\varepsilon)\\
&=\frac{c_{N,s}}{2}\int_{\R^N}\frac{(\eta(|x_\varepsilon-y_\varepsilon|\varepsilon^{-1/2})-\eta(|z+x_\varepsilon-y_\varepsilon|\varepsilon^{-1/2}))(h(x_\varepsilon)-h(z+x_\varepsilon)- h(y_\varepsilon)+h(z+y_\varepsilon))}{|z|^{N+2s}}dz\\
&\leq CD_s\varphi_1(x_\varepsilon)H^{1/2}
\end{align*}
where
\[
H=\int_{\R^N}\frac{(h(x_\varepsilon)-h(z+x_\varepsilon)- h(y_\varepsilon)+h(z+y_\varepsilon))^2}{|z|^{N+2s}}.
\]
Rewrite $H=H_1+H_2$, where
\begin{align*}
H_1&=\int_{B_1}\frac{(h(x_\varepsilon)-h(z+x_\varepsilon)- h(y_\varepsilon)+h(z+y_\varepsilon))^2}{|z|^{N+2s}}\\
H_2&=\int_{B_1^c}\frac{(h(x_\varepsilon)-h(z+x_\varepsilon)- h(y_\varepsilon)+h(z+y_\varepsilon))^2}{|z|^{N+2s}},
\end{align*}
and let us analyze the behavior of $H_1$ and $H_2$ as $\varepsilon\to 0$. We have
\begin{align*}
H_1&=\int_{B_1}\frac{(h(x_\varepsilon)-h(z+x_\varepsilon)- h(y_\varepsilon)+h(z+y_\varepsilon))^2}{|z|^{N+2s}}\\
&=\int_{B_1}\frac{1}{|z|^{N+2s}}\left( \int_0^1(1-t)(Dh(x_\varepsilon+tz)- Dh(y_\varepsilon+tx)\cdot zdt\right)^2 dz\\
&\leq C_1|x_\varepsilon-y_\varepsilon|^2\int_{B_1}\frac{|z|^2}{|z|^{N+2s}}dz\\
&\leq C_1|x_\varepsilon-y_\varepsilon|^2,
\end{align*}
where $C$ is a constant depends on $\|h\|_{C^2}$. On the other hand
\begin{align*}
H_2&=\int_{B_1^c}\frac{(h(x_\varepsilon)-h(z+x_\varepsilon)- h(y_\varepsilon)+h(z+y_\varepsilon))^2}{|z|^{N+2s}}\\
&\leq C_2|x_\varepsilon-y_\varepsilon|^2\int_{B_1^c}\frac{dz}{|z|^{N+2s}}\\
&\leq C_2|x_\varepsilon-y_\varepsilon|^2,
\end{align*}
where $C_2$ depends on $\|h\|_{L^\infty}$. We conclude then
\[
H\leq C|x_\varepsilon-y_\varepsilon|^2,
\]
and so we get
\[
|\cB(\varphi_2,h)(y_\varepsilon)-\cB(\varphi_1,h)(x_\varepsilon)|\leq CD_s\varphi_1(x_\varepsilon)|x_\varepsilon-y_\varepsilon|.
\]
We use the estimate \eqref{controlgradnl} to conclude
\[
|\cB(\varphi_2,h)(y_\varepsilon)-\cB(\varphi_1,h)(x_\varepsilon)|\leq C\frac{1}{\varepsilon^{s/2}}\text{o(1)}|x_\varepsilon-y_\varepsilon|\to 0 \quad \text{as }\varepsilon\to 0.
\]
At this point we finish the proof as in the previous case.
\end{proof}

\begin{remark}\label{laplace}
Theorem \ref{sconvex} is a corollary of Theorem \ref{CPC} and Corollary \ref{CPCH}, since we observe that 
\[
(-\Delta)^s\varphi_1(x_\varepsilon)=(-\Delta)^s\varphi_2(y_\varepsilon).
\]
This holds thanks to the special structure of the function $\eta$. 
\end{remark}

We now come back to the fact that the notion of viscosity solution and weak solutions may not coincide for general Hamiltonians. We borrow the example from \cite{Davila-Topp} and include the computation for completeness. 

In~\cite{Dyda} the author computes the fractional laplacian explicitly for the function $u(x) = (1 - x^2)_+^{1 + s}$, $x \in \R$, where he gets the identity
\[
(-\Delta)^s u(x) = c(1 - (1 + 2s)x^2)_+, \quad x \in (-1,1).
\]
	
Now, let us consider a transport term $h: \R \to \R_+$ smooth and bounded such that $h = 0$ in $(-1,1)$, then
\begin{align*}
B(h,u)(x) &=   \int_{\R} (u(y) - u(x))(h(y) - h(x)) |x - y|^{-(1 + 2s)}dy \\
&= - u(x) \int_{[-1,1]^c} h(y) |x - y|^{-(1 + 2s)}dy\\
&\leq 0.
\end{align*}
	
Therefore we get
\[
(-\Delta)^s u(x) + B(h,u)(x) \leq 0.
\]
in the region $1 - (1 + 2s)x^2 \leq 0$. On the other hand, if $1 > (1 + 2s)x^2$, then $u$ is uniformly bounded by below by some constant $a_s>0$, that is $u(x)\geq a_s$. Hence for $x$ such that $1 > (1 + 2s)x^2$ we have
\[
Lu\coloneqq(-\Delta)^s u(x) + B(h,u)(x) \leq c - a_s \int_{[-1,1]^c} h(y) |x - y|^{-(1 + 2s)}dy.
\]
We can take $h$ large enough outside $(-1,1)$ in order to get 
\[
(-\Delta)^s u(x) + B(h,u)(x) \leq 0 \quad \mbox{en} \ (-1,1),
\]
with $u = 0$ in $(-1,1)^c$. But $u(0) > 0$ and maximum principle fails.

This example gives us a classical solution that does not satisfy the maximum principle and in view of Theorem \ref{CPC} fails to be a viscosity solution. This appears to be natural given that in the previous example $\osc_{R^N}h>1$ and therefore we loose ellipticity of the operator $L$. 

In the special case where the Hamiltonian is proper the comparison principle holds without assuming convexity nor the existence of a strict sub solution, that is without assuming (H2) or (H2h).
\begin{theorem}\label{Hproper}
Let $\Omega\subset\R^N$ be a bounded domain. Fix $\mu>0$ and let $u_1, u_2\in C(\bar\Omega)\cap \text{B}(\R^N)$ be respectively sub and super solutions of 
\begin{align}\label{Hproper1}
\mu u+H(x,D_su)=0
\end{align}
or
\begin{align}\label{Hproper2}
\mu u+H(x,\cB(u,h))=0
\end{align}
with $u_1\leq u_2$ in $\Omega^c$. Furthermore assume that hypothesis \emph{(H1)} holds in case of \eqref{LHproper1} and hypothesis \emph{(H1h)} holds  in case of \eqref{LHproper2}. 

Then $u_1\leq u_2$ in $\Omega$.
\end{theorem}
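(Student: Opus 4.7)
The plan is to run the same doubling-of-variables scheme as in Theorem~\ref{CPC} / Corollary~\ref{CPCH}, but to replace the role of the strict subsolution $\rho_1$ (which required (H2)/(H2h) and convexity) by the proper term $\mu u$. Concretely, I would argue by contradiction and assume $\sup_{\Omega}(u_1-u_2)=\delta>0$, set $\Omega'=\{u_1-u_2>\delta/2\}\subset\subset\Omega$ (since $u_1\leq u_2$ on $\Omega^c$), and introduce
\[
\Phi_\varepsilon(x,y)=u_1(x)-u_2(y)-\eta\!\left(\tfrac{|x-y|}{\varepsilon^{1/2}}\right)
\]
with the same $\eta$ as in \eqref{eta}. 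Let $(x_\varepsilon,y_\varepsilon)$ be a maximizer of $\Phi_\varepsilon$ on $\bar\Omega'\times\bar\Omega'$.

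The first block of steps is to copy verbatim the preliminary estimates from the proof of Theorem~\ref{CPC}: $|x_\varepsilon-y_\varepsilon|\to 0$, $|x_\varepsilon-y_\varepsilon|^2/\varepsilon\to 0$, $\Phi_\varepsilon(x_\varepsilon,y_\varepsilon)\geq\delta$, and $(x_\varepsilon,y_\varepsilon)\in\Omega'\times\Omega'$ for small $\varepsilon$ (using the modulus of continuity of $u_2$). Then, testing the sub-/supersolution inequalities against
\[
\varphi_1(x)=u_2(y_\varepsilon)+\eta\!\left(\tfrac{|x-y_\varepsilon|}{\varepsilon^{1/2}}\right),\qquad \varphi_2(y)=u_1(x_\varepsilon)-\eta\!\left(\tfrac{|x_\varepsilon-y|}{\varepsilon^{1/2}}\right),
\]
gives
\[
\mu u_1(x_\varepsilon)+H(x_\varepsilon,D_s\varphi_1(x_\varepsilon))\leq 0,\qquad \mu u_2(y_\varepsilon)+H(y_\varepsilon,D_s\varphi_2(y_\varepsilon))\geq 0.
\]

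The second block is the subtraction. The symmetry computation in Theorem~\ref{CPC} (which is purely about the translation-invariant structure of $\eta(|\cdot|/\varepsilon^{1/2})$) still yields $D_s\varphi_1(x_\varepsilon)=D_s\varphi_2(y_\varepsilon)$, so by (H1),
\[
\mu\bigl(u_1(x_\varepsilon)-u_2(y_\varepsilon)\bigr)\leq H(y_\varepsilon,D_s\varphi_1(x_\varepsilon))-H(x_\varepsilon,D_s\varphi_1(x_\varepsilon))\leq \omega_1\!\bigl(|x_\varepsilon-y_\varepsilon|(1+D_s\varphi_1(x_\varepsilon))\bigr).
\]
From $\Phi_\varepsilon(x_\varepsilon,y_\varepsilon)\geq\delta$ and $\eta\geq 0$, the left-hand side is bounded below by $\mu\delta>0$, while the estimates \eqref{rate} and \eqref{controlgradnl} from Theorem~\ref{CPC} force the right-hand side to $0$ as $\varepsilon\to 0^+$, giving $\mu\delta\leq 0$, a contradiction.

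For equation \eqref{Hproper2} the same scheme works with (H1h) in place of (H1) and $\cB(\varphi_i,h)$ in place of $D_s\varphi_i$; one reuses the bound
\[
|\cB(\varphi_2,h)(y_\varepsilon)-\cB(\varphi_1,h)(x_\varepsilon)|\leq C\,D_s\varphi_1(x_\varepsilon)\,|x_\varepsilon-y_\varepsilon|
\]
established in Corollary~\ref{CPCH} (via Hölder plus a Taylor expansion of $h$), which tends to $0$ by \eqref{controlgradnl}. I do not foresee a genuine obstacle: the delicate analysis, namely the cancellation $D_s\varphi_1(x_\varepsilon)=D_s\varphi_2(y_\varepsilon)$ and the quantitative decay of $|x_\varepsilon-y_\varepsilon|\,D_s\varphi_1(x_\varepsilon)$, is already carried out in Theorem~\ref{CPC} and Corollary~\ref{CPCH}; the only substantive point is that the proper term $\mu u$ supplies the strictly positive lower bound $\mu\delta$ that, in the convex case, came from the auxiliary strict subsolution through (H2)/(H2h).
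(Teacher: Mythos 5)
Your proposal is correct and follows essentially the same doubling-of-variables strategy as the paper's proof of this theorem: same test functions $\varphi_1,\varphi_2$, same $\eta$, same cancellation $D_s\varphi_1(x_\varepsilon)=D_s\varphi_2(y_\varepsilon)$, same use of (H1)/(H1h) and the decay estimate \eqref{controlgradnl}, with the proper term $\mu u$ replacing the role of the strict subsolution. The only cosmetic difference is that you argue by contradiction over a compactly contained $\Omega'$ (as in Theorem~\ref{CPC}), whereas the paper works directly with the global maximum, splits into a boundary case and an interior case, and concludes from $\liminf_\varepsilon\Phi_\varepsilon(x_\varepsilon,y_\varepsilon)\leq 0$ together with $\max_{\bar\Omega}(u_1-u_2)\leq\Phi_\varepsilon(x_\varepsilon,y_\varepsilon)$.
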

\begin{proof}
The proof follows the same ideas as the proof of Theorem \ref{CPC}. Let
\[
\Phi_\varepsilon(x,y)=u_1(x)-u_2(y)-\eta\left(\frac{|x-y|}{\varepsilon^{1/2}}\right),
\]
where $\eta$ is defined in \eqref{eta} and let $(x_\varepsilon, y_\varepsilon)$ be a maximum point of $\Phi_\varepsilon$. Since both $u_1$ and $u_2$ are bounded we can assume, as in the proof of Theorem \eqref{CPC}, that this is a global maximum. 

From this point we proceed in the exact same manner to conclude that 
\begin{align}\label{rate2}
\frac{|x_\varepsilon-y_\varepsilon|^2}{\varepsilon}\to 0,\quad \text{as }\varepsilon\to 0^+.
\end{align}
This rate of convergence guarantees that
\begin{align}\label{Phito0}
\liminf\limits_{\varepsilon\to 0}\Phi_\varepsilon(x_\varepsilon,y_\varepsilon)\leq0.
\end{align}
Indeed, suppose first that the maximum point $(x_{\varepsilon_n},y_{\varepsilon_n})$ is achieved at the boundary for some sequence $\varepsilon_n\to 0$. If $x_{\varepsilon_n}\in\partial\Omega$ then 
\[
u_1(x_{\varepsilon_n})-u_2(y_{\varepsilon_n})\leq u_2(x_{\varepsilon_n})-u_2(y_{\varepsilon_n}),
\]
since $u_1\leq u_2$ in $\Omega^c$. Since \eqref{rate2} holds and $u_2$ is uniformly continuous then the right hand side of the previous equation goes to zero as $\varepsilon_n\to 0$. Observe now that
\[
\Phi_{\varepsilon_n}(x_{\varepsilon_n},y_{\varepsilon_n})\leq u_1(x_{\varepsilon_n})-u_2(y_{\varepsilon_n}),
\]
and therefore \eqref{Phito0} holds. An analogous statement can be made when $y_n\in\partial\Omega$. 

Let us verify \eqref{Phito0} in the case $(x_{\varepsilon_n},y_{\varepsilon_n})\in\Omega$ for all $\varepsilon$  small. Define 
\[
\varphi_1(x)=u_2(y_\varepsilon)+\eta\left(\frac{|x-y_\varepsilon|}{\varepsilon^{1/2}}\right).
\]
and
\[
\varphi_2(y)=u_1(x_\varepsilon)-\eta\left(\frac{|x_\varepsilon-y|}{\varepsilon^{1/2}}\right),
\]
As in the proof of Theorem \ref{CPC} it is direct to verify that $\varphi_i$ is a test function for $u_i$ for $i=1,2$, and therefore
\[
\mu u_1(x_\varepsilon)+H(x_\varepsilon,D_s\varphi_1(x_\varepsilon))\leq 0 \leq\mu u_2(y_\varepsilon)+H(y_\varepsilon,D_s\varphi_2(y_\varepsilon))
\]
Furthermore we also have
\[
D_s\varphi_1(x_\varepsilon)=D_s\varphi_2(y_\varepsilon),
\]
and therefore hypothesis \textit{i} implies
\[
\mu(u_1(x_\varepsilon)-u_2(y_\varepsilon))\leq \omega_1(|x_\varepsilon-y_\varepsilon|(1+D_s\varphi_1(x_\varepsilon)), 
\]
which implies
\[
\Phi_{\varepsilon_n}(x_{\varepsilon_n},y_{\varepsilon_n})\leq \frac{1}{\mu}\omega_1(|x_\varepsilon-y_\varepsilon|(1+D_s\varphi_1(x_\varepsilon)).
\]
The right hand side of the previous equation goes to zero as $\varepsilon$ as seen in the proof of Theorem \ref{CPC} and therefore \eqref{Phito0} holds. 

Finally note that
\[
\max\limits_{x\in\bar\Omega}(u_1-u_2)\leq\Phi_\varepsilon(x_\varepsilon,y_\varepsilon), 
\]
which together with \eqref{Phito0} implies the desired result.

In the case $u_1$ and $u_2$ are sub and super solutions of \eqref{Hproper1} and \eqref{Hproper2} the proof follows the same ideas and uses the fact that 
\[
\tilde\omega_1(|x_\varepsilon-y_\varepsilon|+|\cB(\varphi_2,h)(y_\varepsilon)-\cB(\varphi_1,h)(x_\varepsilon)|)
\]
can be bounded as in Theorem \ref{CPC}.
\end{proof}

Observe, as in the case of Theorem \ref{sconvex}, that Theorem \ref{sproper} is a direct consequence of Theorem \ref{Hproper} and Remark \ref{laplace}.

\section{Comparison Principle in the parabolic case}\label{SecPar}
This last section is devoted to the proof of the comparison principle in the parabolic case. The proof does not require the Hamiltonian to be convex, since in the parabolic case $u_t$ plays the role of a proper term. As in the previous section we will only present the proof in the case $\sigma=0$ since the proof does not differ in the presence of diffusion.

\begin{theorem}\label{TeoP}
Let $\Omega\subset\R^N$ be a bounded domain and $u_1, u_2\in C(\bar\Omega\times[0,T])\cap \text{B}(\R^N\times[0,T])$ be respectively sub and super solutions of 
\begin{align}\label{PHJ1}
u_t+H(x,t,D_su(x))=0,\quad \text{in } \Omega\times(0,T),
\end{align}
or
\begin{align}\label{PHJ2}
u_t+H(x,t,\cB(u,h)(x))=0,\quad \text{in } \Omega\times(0,T),
\end{align}
with $u_1\leq u_2$ in $\Omega^c\times(0,T)\cup \R^N\times\{0\}$. Assume that $H$ satisfies \emph{(H3)} in case of \eqref{PHJ1} or \emph{(H3h)} in case of \eqref{PHJ2}.

Then $u_1\leq u_2$ in $\Omega\times(0,T)$.

\end{theorem}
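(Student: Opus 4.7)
The plan is to adapt the doubling-of-variables argument from Theorem~\ref{CPC}/Theorem~\ref{Hproper} to the parabolic setting, exploiting the fact that $u_t$ plays the role of a proper term (so neither convexity nor a strict subsolution is needed). As indicated by the author, it suffices to treat $\sigma=0$. I argue by contradiction: if $\sup_{\Omega\times(0,T)}(u_1-u_2)>0$, then for $\mu>0$ sufficiently small one still has
\[
M_\mu := \sup_{\bar\Omega\times[0,T]}\!\left(u_1(x,t)-u_2(x,t)-\frac{\mu}{T-t}\right) > 0.
\]

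To double variables in both space and time I would consider
\[
\Phi_\varepsilon(x,t,y,s) = u_1(x,t) - u_2(y,s) - \eta\!\left(\frac{|x-y|}{\varepsilon^{1/2}}\right) - \frac{(t-s)^2}{\varepsilon} - \frac{\mu}{T-t}
\]
on $\bar\Omega\times[0,T]\times\bar\Omega\times[0,T]$, with $\eta$ as in \eqref{eta}, and pick a maximum point $(x_\varepsilon,t_\varepsilon,y_\varepsilon,s_\varepsilon)$. The usual penalization estimates (repeating the reasoning around \eqref{rate} and its time analogue) give $|x_\varepsilon-y_\varepsilon|^2/\varepsilon\to 0$ and $(t_\varepsilon-s_\varepsilon)^2/\varepsilon\to 0$. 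The term $\mu/(T-t)$ keeps $t_\varepsilon$ bounded away from $T$, while the boundary/initial data $u_1\leq u_2$ on $\Omega^c\times(0,T)\cup\R^N\times\{0\}$, combined with uniform continuity of $u_1,u_2$, rule out $x_\varepsilon,y_\varepsilon\in\partial\Omega$ and $t_\varepsilon=0$ or $s_\varepsilon=0$ for small $\varepsilon$ (exactly as in the localization step of Theorem~\ref{CPC}). Hence the maximum lies in the parabolic interior.

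I would then introduce the test functions
\[
\varphi_1(x,t) = u_2(y_\varepsilon,s_\varepsilon) + \eta\!\left(\frac{|x-y_\varepsilon|}{\varepsilon^{1/2}}\right) + \frac{(t-s_\varepsilon)^2}{\varepsilon} + \frac{\mu}{T-t},
\]
\[
\varphi_2(y,s) = u_1(x_\varepsilon,t_\varepsilon) - \eta\!\left(\frac{|x_\varepsilon-y|}{\varepsilon^{1/2}}\right) - \frac{(t_\varepsilon-s)^2}{\varepsilon},
\]
which are admissible at $(x_\varepsilon,t_\varepsilon)$ and $(y_\varepsilon,s_\varepsilon)$ respectively. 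Since the penalty $\mu/(T-t)$ is independent of $x$, the computation from Theorem~\ref{CPC} still yields $D_s\varphi_1(x_\varepsilon,t_\varepsilon)=D_s\varphi_2(y_\varepsilon,s_\varepsilon)$. Writing the sub/supersolution inequalities and subtracting, the matching temporal-penalty derivatives $2(t_\varepsilon-s_\varepsilon)/\varepsilon$ cancel, and hypothesis \emph{(H3)} yields
\[
\frac{\mu}{(T-t_\varepsilon)^2} \leq \omega_1\!\bigl(|t_\varepsilon-s_\varepsilon|,\;|x_\varepsilon-y_\varepsilon|(1+D_s\varphi_1(x_\varepsilon,t_\varepsilon))\bigr).
\]

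The main obstacle is showing that the right-hand side vanishes as $\varepsilon\to 0$: this is precisely the content of estimate \eqref{controlgradnl}, which yields $D_s\varphi_1(x_\varepsilon,t_\varepsilon)\leq \tilde c\,\varepsilon^{-s/2}\text{o}(1)$, so combined with $|x_\varepsilon-y_\varepsilon|^2/\varepsilon\to 0$ and $s<1$ one obtains $|x_\varepsilon-y_\varepsilon|D_s\varphi_1\to 0$. Together with $|t_\varepsilon-s_\varepsilon|\to 0$ and the continuity of $\omega_1$ at the origin, this contradicts the strictly positive lower bound $\mu/T^2$. For equation \eqref{PHJ2} the same scheme works with $\cB(\varphi_i,h)$ in place of $D_s\varphi_i$: the extra $h$-factor in \emph{(H3h)} is handled exactly as in Corollary~\ref{CPCH} via the H\"older-type bound on the mixed differences of $h$, and the contradiction closes in the same way.
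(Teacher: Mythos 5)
Your proposal is correct, and it takes a genuinely different (though equally standard) route from the paper in two respects. First, to produce the strictly positive slack that closes the contradiction, the paper replaces $u_1$ by $u_1-\rho t$, turning it into a strict subsolution with margin $\rho$; you instead insert the classical Crandall--Ishii--Lions barrier $\mu/(T-t)$ into the doubled functional $\Phi$, which injects an extra $\mu/(T-t_\varepsilon)^2\geq\mu/T^2>0$ into the subsolution inequality. Second, and as a consequence, the paper must invoke the separate stability lemma (the analogue of Lemma~5.1 in \cite{Barles}) asserting that the viscosity inequalities remain valid when the max or min sits on $\Omega\times\{T\}$, because its $\Phi$ can peak at $t=T$; your blow-up penalty makes that lemma unnecessary, since $\Phi\to-\infty$ as $t\to T^-$ automatically forces $t_\varepsilon<T$. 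Both devices are standard and both reduce cleanly to the identical nonlocal-gradient estimate \eqref{controlgradnl} together with the cancellation $D_s\varphi_1(x_\varepsilon,t_\varepsilon)=D_s\varphi_2(y_\varepsilon,s_\varepsilon)$ coming from the symmetry of $\eta$, and the $(H3h)$ case is handled just as you say, via the H\"older bound on the mixed $h$-differences from Corollary~\ref{CPCH}. Two small points worth stating explicitly if you write this up: the $\mu/(T-t)$ term drops out of the bound $\Phi_\varepsilon(x_\varepsilon,t_\varepsilon,y_\varepsilon,s_\varepsilon)\geq\Phi_\varepsilon(x_\varepsilon,t_\varepsilon,x_\varepsilon,t_\varepsilon)$, so the rate $\bigl(|x_\varepsilon-y_\varepsilon|^2+(t_\varepsilon-s_\varepsilon)^2\bigr)/\varepsilon\to0$ follows exactly as in \eqref{rate}; and you only need $t_\varepsilon<T$ (not a uniform gap) to get $\mu/(T-t_\varepsilon)^2\geq\mu/T^2$, so the loose phrase ``bounded away from $T$'' is a little stronger than what is actually used or proved.
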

Before the proof of the theorem we state a classic result of stability of viscosity solutions. 

\begin{lemma}
Let $u_1$ and $u_2$ be respectively sub and super solutions of \eqref{PHJ1} or \eqref{PHJ2}. Then
they are sub and super solutions in $\Omega\times(0,T]$. More precisely the viscosity
inequalities hold if the maximum or minimum points are on $\Omega\times\{T\}$
\end{lemma}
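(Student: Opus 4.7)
The plan is the standard perturbation argument used to push the viscosity inequality up to the terminal time $T$. I treat the subsolution case for equation \eqref{PHJ1}; the supersolution case and the $\cB(\cdot,h)$ case are analogous.

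Suppose $\varphi\in C^{2,1}(\Omega\times(0,T])\cap L^1_w(\R^N)$ is such that $u_1-\varphi$ attains a global maximum at some $(x_0,T)\in\Omega\times\{T\}$. By adding a nonnegative quadratic bump of the form $|x-x_0|^2+(t-T)^2$ to $\varphi$, which does not alter $\varphi_t(x_0,T)$ nor $D_s\varphi(x_0,T)$, I may assume the maximum is strict. Then I introduce the perturbed test function
\[
\psi_\varepsilon(x,t)=\varphi(x,t)+\frac{\varepsilon}{T-t},\qquad t\in(0,T),
\]
which blows up as $t\to T^-$. Since $u_1$ is bounded on $\bar\Omega\times[0,T]$, the function $u_1-\psi_\varepsilon$ achieves its global maximum over $\R^N\times[0,T)$ at some point $(x_\varepsilon,t_\varepsilon)$ with $t_\varepsilon<T$. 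Standard arguments (using strictness of the max at $(x_0,T)$ and boundedness) give $(x_\varepsilon,t_\varepsilon)\to(x_0,T)$ as $\varepsilon\to0^+$; in particular, for $\varepsilon$ small, $(x_\varepsilon,t_\varepsilon)\in\Omega\times(0,T)$, so the viscosity definition applies and
\[
(\psi_\varepsilon)_t(x_\varepsilon,t_\varepsilon)+H\bigl(x_\varepsilon,t_\varepsilon,D_s\psi_\varepsilon(x_\varepsilon,t_\varepsilon)\bigr)\leq 0.
\]

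Since the time perturbation $\varepsilon/(T-t)$ does not depend on $x$, we have $D_s\psi_\varepsilon=D_s\varphi$ pointwise, while $(\psi_\varepsilon)_t=\varphi_t+\varepsilon/(T-t)^2$. Dropping the nonnegative term $\varepsilon/(T-t_\varepsilon)^2$ yields
\[
\varphi_t(x_\varepsilon,t_\varepsilon)+H\bigl(x_\varepsilon,t_\varepsilon,D_s\varphi(x_\varepsilon,t_\varepsilon)\bigr)\leq 0.
\]
Now I let $\varepsilon\to 0^+$. Continuity of $\varphi_t$ handles the first term. For the nonlocal gradient, continuity of $D_s\varphi$ in $(x,t)$ follows from the hypothesis $\varphi\in C^{2,1}\cap L^1_w(\R^N)$ together with dominated convergence, exactly as in the proof of Theorem \ref{CPC}; combined with continuity of $H$ this passes to the limit and gives
\[
\varphi_t(x_0,T)+H\bigl(x_0,T,D_s\varphi(x_0,T)\bigr)\leq 0,
\]
which is the desired viscosity subsolution inequality at the terminal time.

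The main (mild) obstacle is ensuring $(x_\varepsilon,t_\varepsilon)\to(x_0,T)$: one needs both the strictness of the maximum at $(x_0,T)$, which was arranged by the quadratic perturbation, and the fact that the penalty $\varepsilon/(T-t)$ prevents escape of the maximizing sequence to $t=T$ in the limit. For equation \eqref{PHJ2} the entire argument is identical: the perturbation $\varepsilon/(T-t)$ is $x$-independent, so $\cB(\psi_\varepsilon,h)=\cB(\varphi,h)$, and continuity of $\cB(\varphi,h)$ in $(x,t)$ under the regularity of $\varphi$ and $h$ lets us pass to the limit in the same way.
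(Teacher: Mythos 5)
The skeleton of your argument — the penalty $\varepsilon/(T-t)$, the push of the maximum into $t<T$, and the passage to the limit — is exactly the Barles Lemma 5.1 approach that the paper cites, so the overall strategy is appropriate. However, there is a genuine nonlocal gap in your reduction to a strict maximum.

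You claim that adding $|x-x_0|^2+(t-T)^2$ to $\varphi$ "does not alter $D_s\varphi(x_0,T)$." This is false. The term $(t-T)^2$ is harmless (it is $x$-independent, so it does not affect $\cB$, and its $t$-derivative vanishes at $t=T$), but the spatial bump $\psi(x)=|x-x_0|^2$ does change the nonlocal gradient: writing $\cB(\varphi+\psi,\varphi+\psi)(x_0)=\cB(\varphi,\varphi)(x_0)+2\cB(\varphi,\psi)(x_0)+\cB(\psi,\psi)(x_0)$, the last term is $\frac{c_{N,s}}{2}\int |y-x_0|^4|x_0-y|^{-(N+2s)}\,dy>0$ and the cross term $\cB(\varphi,\psi)(x_0)$ is generically nonzero. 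Unlike the local case, where $D(|x-x_0|^2)(x_0)=0$ makes this trick painless, here $D_s$ sees the bump globally. Consequently what your argument actually yields in the limit $\varepsilon\to 0$ is the inequality with $D_s(\varphi+|\cdot-x_0|^2)(x_0,T)$, not with $D_s\varphi(x_0,T)$, and the conclusion does not follow. (A secondary problem with the same term: $|x-x_0|^2\notin L^1_w(\R^N)$ when $N+2s<N+2$, so as written the perturbed function is not even admissible as a test function.)

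The gap is repairable, and you cannot drop the strictness step entirely: without strictness in $x$, the maximizing points $x_\varepsilon$ need not converge to $x_0$ but could drift to a different maximizer of $u_1-\varphi$ at $t=T$, giving the inequality at the wrong point. The standard fix is a two-parameter limit: replace the bump by $\delta\chi(x)|x-x_0|^2+(t-T)^2$ with $\chi$ a compactly supported cutoff equal to $1$ near $x_0$ and $\delta>0$ small. This keeps the test function in $L^1_w$ and makes the maximum strict. Running your $\varepsilon\to 0$ argument then gives, for each $\delta>0$,
\[
\varphi_t(x_0,T)+H\bigl(x_0,T,D_s(\varphi+\delta\chi|\cdot-x_0|^2)(x_0,T)\bigr)\leq 0,
\]
and since $D_s(\varphi+\delta\chi|\cdot-x_0|^2)(x_0,T)\to D_s\varphi(x_0,T)$ as $\delta\to 0$ (dominated convergence) and $H$ is continuous, you may finally let $\delta\to 0$ to obtain the desired inequality. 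The same remark applies verbatim in the $\cB(\cdot,h)$ case, where $\cB(\delta\chi|\cdot-x_0|^2,h)\neq 0$ but again tends to zero as $\delta\to 0$. You should also justify that $u_1-\psi_\varepsilon$ actually attains its supremum, since $\varphi\in L^1_w(\R^N)$ need not be bounded; the strict maximum combined with the penalty localizes the maximizing sequence and handles this.
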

The proof of the previous lemma is identical to Lemma 5.1 in \cite{Barles}. 

\begin{proof}
The proof follows the same ideas as the proof of Theorem \ref{Hproper}, the main difference being that we need to include the time variable in the penalization scheme.
Denote $\Omega_T=\Omega\times (0,T)$ and let
\[
M=\max\limits_{\bar\Omega_T}(u_1-u_2)
\]

Assume that the result does not hold, that is $M>0$ and also assume without loss of generality that $u_1$ is a strict sub solution of the problem. This can easily be achieved by taking $u_1^\rho=u_1-\rho t$ for $\rho>0$ and observing that 
\[
u_1^\rho+H(x,t,D_su_1^\tau)\leq -\rho<0.
\]
From now on we just assume $u_1$ is a strict sub solution and drop the dependence on $\rho$.

Let
\[
\Phi(x,t,y,\tau)=u_1(x,t)-u_2(y,\tau)-\eta\left(\frac{|x-y|}{\varepsilon_1^{1/2}}\right)-\frac{(t-\tau)^2}{\varepsilon_2}
\]
where $\eta$ is given by a slight variation of \eqref{eta}, more precisely is given by 
\begin{align}\label{eta2}
\eta(r)=\begin{cases} r^2 &\mbox{if } |r|\leq (2\max\left(\|u_1\|_{L^\infty(\Omega_T)},\|u_2\|_{L^\infty(\Omega_T)})\right))^{1/2} \\ 
\bar\eta & \mbox{if } |r|>1 \end{cases}
\end{align}
and $\bar \eta $ is a smooth symmetric increasing function growing like $r^\alpha$ when $r\to\infty $ with $\alpha< s$.

Since $u_1$ and $u_2$ are bounded is easy to see that $\Phi$ achieves its global maximum at a point $(\bar x,\bar t,\bar y,\bar \tau)$ in $\bar\Omega_T^2$. Denote $\bar M=\Phi(\bar x,\bar t,\bar y,\bar \tau)$

We need to extract some information on the rate of convergence of the penalization. For this observe that the inequality
\[
\bar M=\Phi(\bar x,\bar t,\bar y,\bar \tau)\geq  u_1(x,t)-u_2(x,t),
\]
and we conclude, by taking the supremum, $M\leq \bar M$. From here we get 
\[
M\leq u_1(\bar x, \bar t)-u_2(\bar y, \bar \tau)-\eta\left(\frac{|\bar x-\bar y|}{\varepsilon_1^{1/2}}\right)-\frac{(\bar t-\bar \tau)^2}{\varepsilon_2}
\]
and since $M>0$ we get
\[
\eta\left(\frac{|\bar x-\bar y|}{\varepsilon_1^{1/2}}\right)+\frac{(\bar t-\bar \tau)^2}{\varepsilon_2}\leq 2\max\left(\|u_1\|_{L^\infty(\Omega_T)},\|u_2\|_{L^\infty(\Omega_T)})\right)
\]
which implies as in Theorem \ref{CPC}, $|\bar x-\bar y|, |\bar t-\bar \tau|\to 0$ as $\varepsilon_1,\varepsilon_2\to 0$. Now thanks to the continuity of $u_1$ and $u_2$ we deduce that $\bar M\to M$ as $\varepsilon_1,\varepsilon_2\to 0$. This can be seen from 
\begin{align*}
M&\leq u_1(\bar x, \bar t)-u_2(\bar y, \bar s)-\eta\left(\frac{|\bar x-\bar y|}{\varepsilon_1^{1/2}}\right)-\frac{(\bar t-\bar \tau)^2}{\varepsilon_2}\\
&\leq u_1(\bar x, \bar t)-u_2(\bar y, \bar \tau)\to M
\end{align*}
Finally, taking into consideration the special structure of $\eta$ and the fact that $u_1(\bar x, \bar t)-u_2(\bar y, \bar \tau)\to M$ we deduce the rates
\begin{align}\label{ratevar2}
|\bar t-\bar \tau|\to 0 \text{ as }\varepsilon_2\to0.
\end{align}
and
\begin{align}\label{ratevar3}
\frac{|x_\varepsilon-y_\varepsilon|^2}{\varepsilon}\to 0,\quad \text{as }\varepsilon\to 0^+.
\end{align}

Note also that $(\bar x,\bar t)$ and $(\bar y,\bar \tau)$ belong to $\Omega\times(0,T]$ for $\varepsilon_1, \varepsilon_2$ sufficiently small. If not, then there is a converging subsequence of $(\bar x,\bar t)$, $(\bar y,\bar \tau)$ so that $(\bar x, \bar t)\to (x,t)\in\Omega^c\times(0,T]\cup\R^N\times\{0\}$ and $\bar y, \bar \tau\to (x,t)\in\Omega^c\times(0,T]\cup\R^N\times\{0\}$. Since $\bar M\to M$ then the continuity of $u_1,u_2$ and the hypotheses of the theorem implies $M=u_1(x,t)-u_2(x,t)\leq 0$, which contradicts the fact $M>0$.

Let 

\[
\varphi_1(x,t)= u_2(\bar y, \bar \tau)+\eta\left(\frac{|x-\bar y|^2}{\varepsilon_1^{1/2}}\right)+\frac{(t-\bar \tau)^2}{\varepsilon_2}
\] 
and 
\[
\varphi_2(y,\tau)=u_1(\bar x, \bar t)-\eta\left(\frac{|\bar x-y|}{\varepsilon_1^{1/2}}\right)-\frac{(\bar t-\tau)^2}{\varepsilon_2}+\delta\rho(\bar x, y)
\]
Assume now that $0<\bar t,\bar \tau\leq T$, $\bar x, \bar y\in\Omega$ for all $\varepsilon_1,\varepsilon_2$  sufficiently small, then it is easy to see that $\varphi_1$ is a test function for $u_1$ at $(\bar x,\bar t)$ and analogously $\varphi_2$ is a test function for $u_2$ at $(\bar y, \bar \tau)$. Since $u_1$ and $u_2$ are viscosity sub and super solutions we get
\[
2\frac{(\bar t-\bar \tau)}{\varepsilon_2}+H\left(\bar x, \bar t,D_s\left(\eta\left(\frac{|\cdot-\bar y|^2}{\varepsilon_1^{1/2}}\right)\right)(\bar x)\right)< -\rho
\]
and
\[
2\frac{(\bar t-\bar \tau)}{\varepsilon_2}+H\left(\bar y, \bar \tau,D_s\left(\eta\left(\frac{|\bar x-\cdot y|^2}{\varepsilon_1^{1/2}}\right)\right)(\bar y)\right)\geq 0
\]
Recall from the proof of Theorem \ref{CPC} that 
\[
D_s\left(\eta\left(\frac{|\bar x-\cdot|^2}{\varepsilon_1^{1/2}}\right)\right)(\bar y)=D_s\left(\eta\left(\frac{|\cdot-\bar y|^2}{\varepsilon_1^{1/2}}\right)\right)(\bar x)
\]
Subtracting the previous inequality and utilizing hypothesis \eqref{PHJ1} we deduce
\begin{align*}
\tau&\leq H\left(\bar y, \bar \tau,D_s\left(\eta\left(\frac{|\bar x-\cdot y|^2}{\varepsilon_1^{1/2}}\right)\right)(\bar y)\right)-H\left(\bar x, \bar t,D_s\left(\eta\left(\frac{|\cdot-\bar y|^2}{\varepsilon_1^{1/2}}\right)\right)(\bar x)\right)\\
&\leq H\left(\bar y, \bar \tau,D_s\left(\eta\left(\frac{|\bar x-\cdot y|^2}{\varepsilon_1^{1/2}}\right)\right)(\bar y)\right)-H\left(\bar x, \bar t,D_s\left(\eta\left(\frac{|\bar x-\cdot y|^2}{\varepsilon_1^{1/2}}\right)\right)(\bar y))\right)\\
&\leq \omega_1\left(|\bar t-\bar \tau|+ |\bar x-\bar y|\left(1+D_s\left(\eta\left(\frac{|\bar x-\cdot y|^2}{\varepsilon_1^{1/2}}\right)\right)(\bar y)\right)\right).
\end{align*}
From here we arrive at a contradiction by noting that, as in the proof of Theorem \ref{CPC},
\[
|\bar x-\bar y|\left(1+D_s\left(\eta\left(\frac{|\bar x-\cdot y|^2}{\varepsilon_1^{1/2}}\right)\right)(\bar y)\right)\to 0\quad\text{as }\varepsilon_1\to 0.
\]
and that $|\bar t -\bar \tau|\to 0$ as $\varepsilon_2\to 0$.
\end{proof}

\begin{remark}
The proof of Theorem \ref{TeoP} in the case $\Omega=\R^N$ is more delicate. In the local case the lack of compactness is overcome by adding a cut-off function in the function $\Phi$. The main issue with this argument in the nonlocal case is that the associated test function $\varphi_1, \varphi_2$ will not have the same nonlocal gradient and therefore we can't compare the associated equations.
\end{remark}

\noindent {\bf Acknowledgements.} 

G. D. was partially supported by Fondecyt Grant No. 1190209.


\begin{thebibliography}{00}

\bibitem{Arnold}
Arnold, V. I. {\em Mathematical methods of classical mechanics.} Translated from the Russian by K. Vogtmann and A. Weinstein. Second edition. Graduate Texts in Mathematics, 60. Springer-Verlag, New York, 1989. xvi+508 pp

\bibitem{A}
Abdellaoui, B., Fern\'andez, A. J. {\em Nonlinear fractional Laplacian problems with nonlocal ``gradient terms".} To appear in Proc. Royal Soc. Edinburgh Sect. A.

\bibitem{Arisawa}
Arisawa, M. {\em A remark on the definitions of viscosity solutions for the integro-differential equations with L\'evy operators}  J. Math. Pures Appl. (9) 89 (2008), no. 6, 567-574.

\bibitem{BE}
Bakry, D.; \'Emery, M. {\em Diffusions hypercontractives.} In S\'eminaire de Probabilit \'es, XIX, 1983/84.Lecture Notes in Math. 1123 177-206. Springer, Berlin.

\bibitem{BGL}
Bakry, D.; Gentil, I.; Ledoux, M. {\em Analysis and geometry of Markov diffusion operators.} Grundlehren der Mathematischen Wissenschaften [Fundamental Principles of Mathematical Sciences], 348. Springer, Cham, 2014. xx+552 pp. 

\bibitem{BC}
Bardi, M., Capuzzo-Dolcetta, I. {\em Optimal control and viscosity solutions of Hamilton-Jacobi-Bellman equations.} Systems \& Control: Foundations \& Applications. Birkh\"auser Boston, Inc., Boston, MA, 1997. xviii+570 pp. 


\bibitem{B}
Barles, G. {\it Existence results for first order Hamilton Jacobi equations.} Ann. Inst. H. Poincaré Anal. Non Linéaire 1 (1984), no. 5, 325-340.


\bibitem{B2}
Barles, G. {\it Uniqueness for first-order Hamilton-Jacobi equations and Hopf formula.} J. Differential Equations 69 (1987), no. 3, 346-367.

\bibitem{B3}
Barles, G. {\it Solutions de viscosit\'e des \'equations de Hamilton-Jacobi.} Math\'ematiques \& Applications (Berlin), 17. Springer-Verlag, Paris, 1994

\bibitem{Barles}
Barles, G. {\em An introduction to the theory of viscosity solutions for first-order Hamilton-Jacobi equations and applications.}  Hamilton-Jacobi equations: approximations, numerical analysis and applications, 49-109, Lecture Notes in Math., 2074, Fond. CIME/CIME Found. Subser., Springer, Heidelberg, 2013. 

\bibitem{BCI}
Barles, G.; Chasseigne, E.; Imbert, C. {\em On the Dirichlet problem for second-order elliptic integro-differential equations.} Indiana Univ. Math. J. 57 (2008), no. 1, 213-246.

\bibitem{BI}
Barles, G.; Imbert, C. {\em Second-order elliptic integro-differential equations: viscosity solutions' theory revisited.} Ann. Inst. H. Poincar\'e Anal. Non Lin\'eaire 25 (2008), no. 3, 567-585.

\bibitem{BSoug}
Barles, G.; Souganidis, Panagiotis E. {\it On the large time behavior of solutions of Hamilton-Jacobi equations.} SIAM J. Math. Anal. 31 (2000), no. 4, 925-939.



\bibitem{BM}
Barrios, B., Medina, M. {\em Equivalence of weak and viscosity solutions in fractional non-homogeneous problems.} arXiv:2006.08384 

\bibitem{BDS}
Banerjee, A., D\'avila, G., Sire, Y. {\em Regularity for parabolic systems with critical growth in the gradient and applications.} arXiv:2005.04004




\bibitem{CaD}
Caffarelli, L. A., D\'avila, G. {\em Interior regularity for fractional systems}. Annales de l'Institut Henri Poincar\'e C, Analyse non lin\'eaire, Volume 36, Issue 1, 2019, Pages 165-180.

%
%


%

\bibitem{CJ}
Chasseigne, E.; Jakobsen, E. R. {\em On nonlocal quasilinear equations and their local limits.} J. Differential Equations 262 (2017), no. 6, 3759-3804.

\bibitem{CIs}
Crandall, M. G.; Ishii, H. {\em The maximum principle for semicontinuous functions.} Differential Integral Equations 3 (1990), no. 6, 1001-1014.


\bibitem{DQT}
D\'avila, G., Quaas, A., Topp, E.{\em Harnack Inequality and self-similar solutions for fully nonlinear fractional parabolic equations.} arXiv:1909.02624 

\bibitem{Davila-Topp}
D\'avila, G., Topp, E. {\em The Nonlocal Inverse Problem of Donsker and Varadhan.}  	arXiv:2011.13295

\bibitem{DLP}
Da Lio, F.; Pigati, A. {\em Free boundary minimal surfaces: a nonlocal approach}. To appear in Annali SNS (2009).  

\bibitem{DR}
Da Lio, F.; Rivi\`ere, T. {\it Three-term commutator estimates and the regularity of 1/2-harmonic maps into spheres.} Anal. PDE 4 (2011), no. 1, 149-190. 

\bibitem{DR2}
Da Lio, F.; Rivi\`ere, T. {\it Sub-criticality of non-local Schr\"odinger systems with antisymmetric potentials and applications to half-harmonic maps.} Adv. Math. 227 (2011), no. 3, 1300-1348.

\bibitem{Dyda}
Dyda, B. {\em Fractional calculus for power functions and eigenvalues of the fractional Laplacian.}  Fract. Calc. Appl. Anal. 15 (2012), no. 4, 536-555.

%
%
%
%

%
%

\bibitem{Ishii}
Ishii, H. {\it Perron's method for Hamilton-Jacobi equations.} Duke Math. J. 55 (1987), no. 2, 369-384.

\bibitem{Is1}
Ishii, H. {\em On the equivalence of two notions of weak solutions, viscosity solutions and distribution solutions.} Funkcial. Ekvac. 38 (1995), no. 1, 101-120. 

\bibitem{JK}
Jakobsen, E. R.; Karlsen, K. H.{\em A``maximum principle for semicontinuous functions" applicable to integro-partial differential equations.} NoDEA Nonlinear Differential Equations Appl. 13 (2006), no. 2, 137-165.

\bibitem{MR}
Ma, Z. M., R\"ockner, M. {\em Introduction to the theory of (nonsymmetric) Dirichlet forms.} Universitext. Springer-Verlag, Berlin, 1992. vi+209 pp. 

%
%
\bibitem{Millot-Sire}
Millot, V., Sire, Y. {\em On a fractional Ginzburg-Landau equation and 1/2-harmonic maps into spheres.} Arch. Ration. Mech. Anal. 215 (2015), no. 1, 125-210.

\bibitem{NJ}
Namah, Gawtum; Roquejoffre, Jean-Michel. {\it Remarks on the long time behaviour of the solutions of Hamilton-Jacobi equations.} Comm. Partial Differential Equations 24 (1999), no. 5-6, 883-893.

%
%
%
%

\bibitem{RS}
Ros-Oton, X., Serra, J.{\em The Dirichlet problem for the fractional Laplacian: Regularity up to the boundary.} Journal de Mathématiques Pures et Appliqu\'ees, Vol. 101, Issue 3, 2014, pp. 275-302,

\bibitem{RS2}
Ros-Oton, X., Serra, J. {\em The Pohozaev Identity for the Fractional Laplacian}.  Arch Rational Mech Anal 213, 587-628 (2014). 

\bibitem{SWZ}
Spener, A.; Weber, F.; Zacher, R. {\em The fractional Laplacian has infinite dimension.} Comm. Partial Differential Equations 45 (2020), no. 1, 57-75. 


\end{thebibliography}
\end{document}